\def\[#1\]{\begin{eqnarray*}#1\end{eqnarray*}}
\def\tr{\hbox{\rm tr}\,}
\def\SL{\hbox{\bf SL}}
\def\phi{\varphi}
\newtheorem{thm}{Theorem}[section]
\newtheorem{dfn}[thm]{Definition}
\newtheorem{cor}[thm]{Corollary}
\newtheorem{prop}[thm]{Proposition}
\newtheorem{lemma}[thm]{Lemma}
\newcommand{\Pf}{{\em Proof}. }
\newcommand{\EPf}{\hbox{}\hfill$\Box$\vspace{.5cm}}
\newcommand{\C}{{{\mathbb C}}}
\newcommand{\R}{{{\mathbb R}}}
\newcommand{\Z}{{{\mathbb Z}}}
\def\tr{\hbox{\rm tr}\,}
\def\SL{\hbox{\bf SL}}
\def\phi{\varphi}
\newenvironment{proof}{{\normalsize {\sc Proof}:}}{{{\hfill $\Box$\vspace{.5cm}}}}
\def\C{{\mathbb C}}
\def\R{{\mathbb R}}
\date{}
\title{Reductions of path structures and classification of homogeneous structures in dimension three}
\author{ E. Falbel (corresponding author), Martin Mion-Mouton and Jose Miguel Veloso}
\begin{document}

\maketitle
\newcommand{\D}{\mbox{$\cal D$}}
\newtheorem{df}{Definition}[section]
\newtheorem{te}{Theorem}[section]
\newtheorem{co}{Corollary}[section]
\newtheorem{po}{Proposition}[section]
\newtheorem{lem}{Lemma}[section]
\newcommand{\Ad}{\mbox{Ad}}
\newcommand{\ad}{\mbox{ad}}
\newcommand{\im}[1]{\mbox{\rm im\,$#1$}}
\newcommand{\bm}[1]{\mbox{\boldmath $#1$}}
\newcommand{\sime}{\mbox{sim}}
\begin{abstract}
In this paper we show that if a path structure has non-vanishing curvature at a point then it has a canonical reduction to a $\Z/2\Z$-structure at a neighbourhood of that point (in many cases it has a canonical parallelism).   A simple implication of this result is that
the automorphism group of a non-flat path structure is of maximal dimension three (a result by Tresse of 1896).  We also classify the invariant path structures on three-dimensional Lie groups.

Keywords: Path structures, Homogeneous spaces, Cartan connections, automorphism group.
\end{abstract}

\section{Introduction}

A path structure on a 3-manifold is a choice  of two subbundles $T^1$ and $T^2$ in $TM$ such that
$T^1 \cap T^2=\{ 0\}$ and such that $T^1 \oplus T^2$ is a contact distribution.  This geometry gives rise to a Cartan connection on a canonical principal  bundle (with structure group, $B_\R$, the Borel subgroup of upper triangular matrices in $\SL(3,\R)$) which we call $Y$ (\cite{Car}, see \cite{IL} for a modern presentation and section \ref{section:Cartan}).  There are two curvature functions $Q^1$ and $Q^2$ defined on $Y$ which should determine, in certain situations, the path structure up to equivalence.  Indeed, when $Q^1=Q^2=0$ the path structure is locally equivalent to the path structure on the model space $
\SL(3,\R)/B_\R
$ (see section \ref{section:Cartan}).

A simple way to define a path structure on a 3-manifold is to fix a contact form and two transverse vector fields contained in the kernel of the form.    In particular, this defines a parallelism of  the 3-manifold.  Reciprocally, one might ask whether there exists a canonical parallelism, with transverse vector fields contained in the contact distribution, 
associated to a path structure.  In this case, the automorphism group of the path structure should coincide with the automorphism group of the parallelism.

We show in this paper that if a path structure has non-vanishing curvature at a point then it has a canonical reduction to a $\Z/2\Z$-structure at a neighbourhood of that point (in many cases it has a canonical parallelism).

In section \ref{section:Cartan} we recall the construction of the Cartan bundle $Y$ and adapted connection to a path structure on a 3-manifold.   The curvature invariants $Q^1$ and $Q^2$ are related to the invariants found by Tresse (see the explicit computation in section 3.5 in \cite{FVODE}).
We also recall the definition of strict path structure, that is, when a contact form is fixed over the manifold and the definition  of a Cartan bundle $Y^1$ and  connection adapted to that structure which was used in (\cite{FMMV}) to obtain a classification of compact 3-manifolds with non-compact automorphism group preserving the strict path structure. We also give details in appendix \ref{section:pseudo-embedding} for a natural embedding $Y^1\to Y$ which is used to compute curvatures of the homogeneous path structures in section \ref{section:homogeneous}.

In section \ref{section:reduction} we prove a canonical reduction of a path structure when the structure is non-flat:
\begin{thm}\label{theorem-intro}
If the path structure is not flat, there exists a canonical reduction of the fiber bundle $Y$ to a $\Z/2\Z$-structure.  \end{thm}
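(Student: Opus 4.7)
The plan is to use the $B_\R$-equivariance of the curvature functions $Q^1, Q^2$ and their covariant derivatives to produce an equivariant map $Y \to W$ whose generic orbit has stabilizer $\Z/2\Z$. Since $B_\R$ has Levi decomposition $B_\R = T\cdot N$ with $T$ the $2$-dimensional diagonal torus and $N$ the $3$-dimensional unipotent radical, I expect a two-step reduction: first normalize the $T$-action using $Q^1$ and $Q^2$ themselves, then kill $N$ using their covariant derivatives.

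From the structure equations of the Cartan connection I would first extract the characters $\chi^i : B_\R \to \R^*$ governing $Q^i(y\cdot b) = \chi^i(b)\, Q^i(y)$; these necessarily factor through $T$, since every character of $B_\R$ vanishes on its commutator subgroup $N$. Non-flatness at a point $x_0 \in M$ gives $(Q^1(y_0), Q^2(y_0)) \neq 0$ for some $y_0$ in the fibre over $x_0$, and by continuity this persists on a neighbourhood. Because the real torus $T = \{\mathrm{diag}(a,b,c) : abc = 1\}$ has four connected components, and $\chi^1, \chi^2$ should be linearly independent weights of $T$, normalising $(Q^1, Q^2)$ to a canonical value (such as $(\pm 1, \pm 1)$ when both are non-zero, or $(\pm 1, 0)$, $(0, \pm 1)$ on the degenerate strata) cuts the $T$-action down to a $\Z/2\Z$-residue while leaving $N$ free. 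To then eliminate $N$, I would expand $dQ^i$ in the Cartan coframe: the resulting coefficients transform equivariantly under $B_\R$, and by the Bianchi identities their $N$-components are non-trivial in a controlled way, so three independent such coefficients should suffice to cut the three-dimensional group $N$ down to a point.

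The main obstacle I anticipate is the case analysis according to which of $Q^1, Q^2$ vanishes at the chosen point, since different components of $dQ^i$ must be used to break $N$ in each case, and one has to verify that the resulting subbundles glue smoothly on the overlaps where more than one normalisation is possible. The introductory remark that in many cases the reduction is in fact a full parallelism suggests that additional covariant derivatives often eliminate even the residual $\Z/2\Z$, which should provide a useful self-consistency check when computing the transformation formulas for the various curvature components and their derivatives under $B_\R$.
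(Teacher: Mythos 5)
Your plan works, and coincides with the paper's argument, precisely on the open stratum where both curvatures are nonzero: there the two independent characters $\tilde Q^1=ab^5Q^1$, $\tilde Q^2=a^{-5}b^{-1}Q^2$ let you normalize $(Q^1,Q^2)=(1,\epsilon)$, which forces $a=b=\pm1$ and cuts the torus to $\Z/2\Z$, and the three unipotent parameters $f,c,e$ are then killed by normalizing $U_1=U_2=0$ and $S^1+\epsilon S^2=0$ --- exactly the coefficients appearing in the expansions (\ref{dq1}), (\ref{dq2}) of $dQ^i$ and in $\Psi$, as in Proposition \ref{rq1q2}.

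The genuine gap is in the degenerate stratum $Q^1=0$, $Q^2\neq 0$, and it is twofold. First, normalizing $(Q^1,Q^2)=(0,1)$ does \emph{not} cut $T\cong(\R^*)^2$ down to $\Z/2\Z$: only one character is being fixed, so a one-parameter subgroup $\{a^5b=1\}\cong\R^*$ of the torus survives. Second, your supply of first-order invariants is exhausted before you can break this residual $\R^*$: since $Q^1\equiv 0$, equation (\ref{dq1}) forces $S^1=U_2=T^1=0$ identically, so the only available coefficients are $S^2,U_1,T^2$ from $dQ^2$, and these are exactly consumed in killing the three-dimensional unipotent radical (Proposition \ref{rq2}), leaving an $\R^*$-bundle $Y^1$ with nothing left to normalize at that order. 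The missing idea is a \emph{non-vanishing statement for a higher-order invariant}: one must pass to the second-order Bianchi identity obtained by differentiating (\ref{dt2}), namely (\ref{ddt2}), and show (Proposition \ref{pg}) that the coefficients $X_1$ and $Y_1$ cannot simultaneously vanish on $Y^1$ --- otherwise one would get $5\,\omega\wedge\omega^1=0$, a contradiction. Only then can the residual $\R^*$ be reduced (to a point if $Y_1\neq0$, to $\Z/2\Z$ if $Y_1=0\neq X_1$). Without this step your argument could stall at an $\R^*$-bundle, and the theorem --- and with it the bound $\dim\mathrm{Aut}\leq 3$ --- would not follow; equivariance and weight-counting alone do not guarantee that some covariant derivative is available to finish the job.
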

A more precise theorem is proved in section \ref{section:reduction} where we give conditions for the existence of a further reduction to  a parallelism.  The theorem implies the classical theorem by Tresse that a non-flat path structure has an automorphism group of dimension at most three (\cite{tresse}).
While locally homogeneous path structures
of course realize this upper bound,
Tresse's theorem does not, however, answer the question
wether a path structure having a three-dimensional
automorphism group, is locally homogeneous or not.
A one-dimensional subgroup of automorphisms may
indeed fix a point, forbidding the orbit of the automorphism
group at this point to be open.
A direct and important Corollary of Theorem \ref{theorem-intro},
proved in section \ref{subsection:3dim},
shows that this phenomenon does not happen.
\begin{cor}\label{corollaire-intro}
At a point where the curvature is non-zero and
the algebra of local Killing fields
has dimension at least three,
a path structure is locally isomorphic to a
left-invariant path structure on a three-dimensional
Lie group.
\end{cor}
This is a motivation to classify
left invariant path structures on three dimensional Lie groups,
which is done
in section  \ref{section:homogeneous}.  The results are gathered in tables in section \ref{subsection:table}.  We choose for each  structure a parallelism (some are canonical) and we compute the curvatures for each of these invariant structures using an embedding of the group in the corresponding Cartan bundle $Y$ (see Proposition \ref{proposition:curvatures} in section \ref{section:curvatures}).

In the last section \ref{subsection-invariantsSL2} we give a  geometric description of the invariant structures on $\SL(2,\R)$ involving the type of the contact plane with respect to the Killing metric and a cross-ratio which parametrizes the positions of the one dimensional distributions in the contact plane.   Similar descriptions can be made for each of the three dimensional groups. 

It is interesting to note that homogeneous CR structures in dimension three were classified by Cartan in \cite{Car-CR} (see a presentation analogous to the classification of path structures in the present paper in \cite{FG}).  We did not find, however,  the analogous classification of path structures in his work.  The gap phenomenon which is a consequence
of theorem \ref{theorem-intro} is more general (see \cite{gapphenomenon}) and, in particular, is valid for CR structures.  But the detailed reductions obtained in section  \ref{section:reduction} and the corollary above were not worked out in the case of CR structures as far as we know.

\section{The Cartan connection of a path structure}\label{section:Cartan}

Path geometries are treated in detail in section 8.6 of \cite{IL} and in \cite{BGH}.
Le $M$ be a real three dimensional manifold and $TM$ be its tangent bundle.

\begin{dfn}A path structure on $M$ is a choice of two subbundles $T^1$ and $T^2$ in $TM$ such that 
$T^1 \cap T^2=\{ 0\}$ and such that $T^1 \oplus T^2$ is a contact distribution.
\end{dfn}

The condition that $T^1 \oplus T^2$ be a contact distribution means that, locally,  there exists a one form $\theta\in {T^*M}$ such that
$\ker \theta= T^1\oplus T^2$ and $d\theta \wedge \theta$ is never zero.

Flat path geometry   is the geometry of real flags in $\R^3$.  That is the geometry of the space of all couples $(p,l)$ where $p\in \R P^2$ and $l$ is a real projective line
containing $p$.  The space of flags is identified to the quotient
$$
\SL(3,\R)/B_\R
$$
where $B_\R$ is the Borel group of all real upper triangular matrices.  
 The Maurer-Cartan form on $\SL(3,\R)$ is given by a form  with values  in the Lie algebra ${\mathfrak {sl}}(3,\R)$ :

$$\pi=\left ( \begin{array}{ccc}
                        \phi+w     &   \phi^2   &   \psi  \\

                     \omega^1          &   -2w   &    \phi^1 \\

                      \omega         &  \omega^{2}   &      -\phi+w
                \end{array} \right )
$$
satisfying the equation $d\pi+\pi\wedge \pi =0$.  That is 
$$
d\omega = \omega^1\wedge \omega^2 +2\phi\wedge \omega
$$
$$
d\omega^1=\phi \wedge  \omega^1+3w\wedge \omega^1 +\omega \wedge \phi^1
$$
$$
d\omega^2=\phi\wedge \omega^2-3w\wedge \omega^2 -\omega\wedge \phi^2
$$
$$
dw= -\frac{1}{2}\phi^2\wedge \omega^1+\frac{1}{2}\phi^1\wedge \omega^2
$$
$$
d\phi = \omega \wedge \psi -\frac{1}{2}\phi^2\wedge \omega^1-\frac{1}{2}\phi^1\wedge \omega^2
$$
$$
d\phi^1 = \psi\wedge \omega^1-\phi\wedge \phi^1+3w\wedge \phi^1
$$
$$
d\phi^2 = -\psi\wedge \omega^2-\phi\wedge \phi^2-3w\wedge \phi^2
$$
$$
d\psi= \phi^1\wedge \phi^2+2\psi\wedge \phi.
$$

\subsection{The coframe bundle $Y$ over the bundle $E$ of contact forms}\label{section:coframes}

We recall the construction of  the $\R^*$-bundle of contact forms (we follow here \cite{FVflag} and \cite{FVODE}). 
 
Define ${E}$ to be the $\R^*$-bundle of all forms $\theta$ on $TM$ such that $\ker \theta=T^1\oplus T^2$.   Remark that this bundle is trivial if and only if there   exists a globally defined 
non-vanishing form $\theta$. Define the set of forms $\theta^1$ and $\theta^2$ on $M$ satisfying
$$
\theta^1(T^1)\neq 0 \  
{\mbox{and}}\ 
\theta^2(T^2)\neq 0,
$$
$$
\ker \theta^1_{\vert\ker\theta}=T^2  ~\mbox{and}
 \  \ker \theta^2_{\vert\ker\theta}=T^1.
$$
Henceforth we fix one such choice, and all others are given by $\theta'^{i} = a^i \theta^i + v^i\theta$.

 On $E$ we define the tautological
form $\omega$.  That is $\omega_\theta=\pi^*(\theta)$ where
$\pi: {E}\rightarrow M$ is the natural projection.  
We also consider the tautological forms defined
by the forms $\theta^1$ and $\theta^2$ over the line bundle $E$.  That is, for each 
$\theta\in E$  we let $\omega^i_\theta= \pi^*(\theta^i)$.
At each point $\theta\in E$ we have the family of forms defined on $E$:
$$
        \omega' = \omega
$$
$$
        \omega'^{1} = a^1 \omega^1 + v^1\omega
$$
$$
        \omega'^{2} = a^2 \omega^2 + v^2\omega
$$

 We may, moreover, suppose that 
 $$
 d\theta= \theta^1\wedge \theta^2 \ \ \mbox{modulo} \ \theta
 $$ 
 and therefore
  $$
 d\omega= \omega^1\wedge \omega^2 \ \ \mbox{modulo} \ \omega .
 $$ 
  This imposes that $a^1a^2=1$.

  Those forms
vanish on vertical vectors, that is, vectors in the kernel
of the map $TE\rightarrow TM$. In order to define non-horizontal 1-forms 
we let $\theta$ be a section of $E$ over $M$ and introduce the coordinate
$\lambda\in \R^*$ in $E$.
By abuse of notation,
let $\theta$ denote the tautological form on the section $\theta$.
We write then the tautological form $\omega$ over $E$ as 
$$
\omega_{\lambda\theta}=\lambda \theta.
$$
Differentiating this formula we obtain
\begin{equation}  \label{domega}
                d\omega = 2\phi\wedge \omega + 
\omega^1\wedge\omega^{2}  
\end{equation}
where $\phi= \frac{d\lambda}{2\lambda}$ 
 modulo $\omega, \omega^1, 
\omega^{2}$.
Here
$
\frac{d\lambda}{2\lambda}
$ is a form intrinsically defined on $E$ up to horizontal
forms.
We obtain in this way a coframe bundle  satisfying equation \ref{domega} over $E$.  The coframes at each point of $E$ are given by
$$
        \omega' = \omega
$$
$$
        \omega'^{1} = a^1\omega^{1} + v^1 \omega
$$
$$
        \omega'^{2} = a^2\omega^{2} + v^2 \omega
$$
$$
        \phi'= \phi -\frac{1}{2}a^1v^{2}\omega^1 +\frac{1}{2}a^2v^1\omega^{2} +s\omega
$$
$v^1, v^2,s \in \R$  and $a^1,a^2\in \R^*$ such that $a^1a^2=1$.

\begin{dfn}
We denote by $Y$ the  coframe bundle
 $Y\rightarrow E$ given by the set of 1-forms  $\omega, \omega^{1},
  \omega^{2}, \phi$ as above.
  Two coframes
are related by
$$
(\omega', \omega'^{1},
  \omega'^{2}, \phi')=
(\omega, \omega^{1},
  \omega^{2}, \phi)
 \left ( \begin{array}{cccc}

                        1       &    v^1                &       v^2 & s \\

                        0 &     a^1 & 0            &       -\frac{1}{2}a^1 v^2  \\
                        0   &  0 & a^2             &       \frac{1}{2}a^2v^{ 1} \\
                        0      &  0 & 0     &       1

                \end{array} \right )
$$
where  and $s, v^1, v^2 \in \R$ and $a^1,a^2\in \R^*$ satisfy $a^1a^2=1$.
\end{dfn}

The bundle $Y$ can also be fibered over the manifold $M$.  In order to describe the bundle $Y$ as a principal fiber bundle over
 $M$ observe that choosing a local section $\theta$ of $E$ and forms $\theta^1$ and $\theta^2$ on $M$ such that
 $d\theta=\theta^1\wedge \theta^2$ one can write a trivialization of the fiber bundle as

$$
      \omega=  \lambda \theta
$$
$$
      \omega^1=   a^1\theta^{1} + v^1\lambda\theta
$$
$$
       \omega^2= a^2\theta^{2} + v^2\lambda\theta
$$
$$
       \phi= \frac{d\lambda}{2\lambda} -\frac{1}{2}a^1v^{2}\theta^1 +\frac{1}{2}a^2v^1\theta^{2} +s\theta,
         $$
where $v^1, v^2,s \in \R$  and $a^1,a^2\in \R^*$ such that $a^1a^2=\lambda$.  Here the coframe $\omega, \omega^1,\omega^2,\phi$ is seen as 
composed of tautological forms.
The group $H$ acting on the right of this bundle 
 is 
$$
H=\left\{ \left ( \begin{array}{cccc}

                        \lambda   	&    	v^1 \lambda      	&       v^2\lambda 	& 		s \\

                       0				 &     a^1 			&		 0            &      - \frac{1}{2}a^1 v^2\\
                       0   				&  	0 				&		 a^2        &        \frac{1}{2}a^2v^{ 1}\\
                        0		       &  		0              &     0   			&       1

                \end{array} \right )
\mbox{
where   $s, v^1, v^2 \in \R$ and $a^1,a^2\in \R^*$ satisfy $a^1a^2=\lambda$
}
\right\}.
$$

The bundle $Y\rightarrow M$ is a principal bundle with structure group $H$ which can be identified 
to the Borel group 
$B\subset \SL(3,\R)$ of upper triangular matrices  with determinant one via the map
$$
j: B\rightarrow H
$$
given by

$$
\left( \begin{array}{ccc}

                        a  	&    	c      	&       e \\

                       0				 &     \frac{1}{ab}		&		f\\
                       0   				&  	0 				&		b
                      
       \end{array} \right ) 
       \longrightarrow  
       \left ( \begin{array}{cccc}

                        \frac{a}{b}  		&    	-a^2 f    	&       	\frac{c}{b}		& 		-eb+\frac 1 2acf \\

                       0				&     a^2b 		&		0            		&      		-\frac 1 2 abc\\
                       0   			&  	0 		&		\frac{1}{ab^2}    &        	-\frac{f}{2b}\\
                       0		         	&  	0              	&       	0   			&       	1

                \end{array} \right )
$$

\subsection{The connection form on the bundle $Y$}\label{section:connectionforms}

Here we review the $   {\mathfrak {sl}}(3,\R)$-valued Cartan connection defined on the  coframe bundle $Y\rightarrow E$  as described in \cite{FVflag,FVODE}.
  
  Each point in the  coframe bundle $Y$ over $E$ is lifted to a  family of tautological forms on $T^*Y$.  This family is then completed to obtain a coframe bundle over $Y$ by an appropriate choice of conditions.  As usual, the conditions are essentially curvature conditions and  
  are obtained by differentiating the tautological forms and introducing new linearly independent forms satisfying certain canonical equations.  We state the final existence theorem of the adapted Cartan connection:

 \begin{thm} There exists a unique ${\mathfrak {sl}}(3,\R)$ valued connection form on the bundle $Y$
 $$\pi=\left ( \begin{array}{ccc}
                        \phi+w     &   \phi^{ 2}   &   \psi  \\

                     \omega^1          &   -2 w    &    \phi^1  \\

                      \omega         &  \omega^{2}   &      -\phi+w
                \end{array} \right ),
$$
whose curvature satisfies
$$\Pi= d\pi+\pi\wedge\pi=\left ( \begin{array}{ccc}
                        0 &   \Phi^{2}      &  \Psi   \\

                         0  &  0    &  \Phi^1  \\

                           0  &   0  &    0
               \end{array} \right )
$$
with $
\Phi^1=Q^1\omega\wedge \omega^2,\ \ 
\Phi^2=Q^2\omega\wedge \omega^1 \ 
\mbox{and}\ \ 
\Psi =\left( U_1\omega^1+ U_2\omega^2\right) \wedge \omega,
$
for functions $Q^1, Q^2, U^1$ and $U^2$  on $Y$.

 \end{thm}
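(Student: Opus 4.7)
My plan is to apply Cartan's equivalence method, starting from the four tautological 1-forms $(\omega,\omega^1,\omega^2,\phi)$ already constructed on $Y$ and successively introducing new 1-forms forced by the structure equations, while absorbing as much torsion as possible into the choice of these new forms. The final residual torsion will be the curvature, and the normalization conditions needed at each stage will pin down the connection uniquely.

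First, I take the structure equation already in hand,
\[
d\omega=2\phi\wedge\omega+\omega^1\wedge\omega^2,
\]
and differentiate the tautological forms $\omega^1$ and $\omega^2$. Using the transformation formulas from Proposition~\ref{proposition:coframe} (group $H$), the indeterminacy in $d\omega^i$ lives in $\omega^i$-multiples of the fiber parameters; I introduce new semi-basic 1-forms $w,\phi^1,\phi^2$ of the correct weight so that I can write
\[
d\omega^1=\phi\wedge\omega^1+3w\wedge\omega^1+\omega\wedge\phi^1+T^1,\qquad d\omega^2=\phi\wedge\omega^2-3w\wedge\omega^2-\omega\wedge\phi^2+T^2,
\]
where $T^1,T^2$ are semi-basic 2-forms that cannot be absorbed. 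Next I exploit the remaining ambiguity of $w$ (modulo $\omega$) and of $\phi^i$ (modulo $\omega,\omega^i$): imposing the Bianchi-type consistency condition obtained from $d(d\omega)=0$ kills the $\omega^1\wedge\omega^2$ components and forces $T^i$ to be of the form $Q^i\,\omega\wedge\omega^{3-i}$, giving the prescribed $\Phi^i$.

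Next I differentiate $w$ and $\phi$ themselves. Once more, writing down the most general expressions compatible with the $\mathfrak{sl}(3,\R)$ block structure of $\pi$, the compatibility conditions $d^2\omega^1=d^2\omega^2=0$ (equivalently the Bianchi identities for the $\mathfrak{sl}(3,\R)$-valued form $\pi$) both force the same new 1-form $\psi$ to appear in $d\phi$ and $dw$ in the precise way dictated by the flat model, and restrict the residual torsion of the top-right slot to $\Psi=(U_1\omega^1+U_2\omega^2)\wedge\omega$. The remaining structure equations for $d\phi^1,d\phi^2,d\psi$ are then fully determined by differentiating what has already been defined, and a direct check shows that the resulting $\mathfrak{sl}(3,\R)$-valued form $\pi$ has curvature strictly upper-triangular with $\Phi^1,\Phi^2,\Psi$ as stated.

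For uniqueness, at each stage the ambiguity in the newly introduced form is a linear space of semi-basic forms, and I record the unique choice singled out by the normalization (absorption of all torsion except the one prescribed slot). Since each normalization has a unique solution, $\pi$ is unique. The main obstacle I expect is purely bookkeeping: carefully tracking which torsion coefficients can be absorbed into which 1-form at each stage, and verifying that the remaining Bianchi identities $d\Pi=\Pi\wedge\pi-\pi\wedge\Pi$ are automatically satisfied once the upper-triangular form of $\Pi$ is achieved; this ensures no obstruction appears at the last step when introducing $\psi$.
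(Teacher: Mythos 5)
Your overall strategy---Cartan's equivalence method, introducing $w,\phi^1,\phi^2,\psi$ by successive prolongation and absorbing all absorbable torsion---is exactly the route the paper takes (the paper itself only sketches this and defers the details to [FV1, FV2]). However, there is one concrete error in your sketch that would derail the bookkeeping if carried out literally: you place the essential curvature in the wrong slot. You write $d\omega^i=\phi\wedge\omega^i\pm 3w\wedge\omega^i\pm\omega\wedge\phi^i+T^i$ and claim that the unabsorbable part of $T^i$ is $Q^i\,\omega\wedge\omega^{3-i}$, ``giving the prescribed $\Phi^i$.'' But in the $\mathfrak{sl}(3,\R)$ matrix $\pi$, the entries $\omega^1$ and $\omega^2$ sit in the $(2,1)$ and $(3,2)$ slots, where the curvature $\Pi$ is required to \emph{vanish}; the torsion of $d\omega^1$ and $d\omega^2$ is entirely absorbed into the choice of $w$ and $\phi^i$ (this is equation \ref{domegai}, which has no residual term). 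The 2-forms $\Phi^1=Q^1\,\omega\wedge\omega^2$ and $\Phi^2=Q^2\,\omega\wedge\omega^1$ arise one level higher, as the residual curvature of the equations for $d\phi^1$ and $d\phi^2$ (equations \ref{Phi1} and \ref{Phi2}), and $\Psi$ as that of $d\psi$ (equation \ref{dPsi'}). If $d\omega^i$ genuinely carried a torsion term $Q^i\,\omega\wedge\omega^{3-i}$, the curvature matrix would not be strictly upper triangular, contradicting the statement you are proving; your own closing sentence asserts the correct upper-triangular form, so the sketch is internally inconsistent.

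Relatedly, your uniqueness argument is under-specified at the last stage. After $d(d\omega)=0$ and $d(d\omega^i)=0$ fix $w$ and $\phi$ up to semibasic ambiguity, the forms $\phi^1,\phi^2,\psi$ still retain residual freedom (e.g.\ $\phi^1$ modulo multiples of $\omega$ and $\omega^1$, $\psi$ modulo semibasic forms). What pins them down uniquely is not an absorption of lower-order torsion but the normalization imposed directly on the curvature 2-forms: requiring $\Phi^1$ to contain only an $\omega\wedge\omega^2$ component, $\Phi^2$ only $\omega\wedge\omega^1$, and $\Psi$ to have no $\omega^1\wedge\omega^2$ component. You should state these as the defining normalizations and check, via the group action (the transformation formulas \ref{adjunta}), that each has a unique solution; as written, ``absorption of all torsion except the one prescribed slot'' glosses over precisely the step where uniqueness is actually established.
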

 
 Writing the components of the curvature form explicitly we obtain the following equations:

 \begin{equation}  
                d\omega = 2\phi\wedge \omega + 
\omega^1\wedge\omega^{2}  
\end{equation}
 \begin{equation}\label{domegai}
d\omega^1=\phi \wedge  \omega^1+
3w \wedge \omega^1 +\omega\wedge \phi^1\  \mbox{and}\ \ 
d\omega^2=\phi \wedge  \omega^2 -
3 w\wedge \omega^2-\omega\wedge \phi^2
\end{equation}
\begin{equation}\label{dphi}
d\phi = \omega \wedge \psi-\frac{1}{2}(
\phi^{2}\wedge \omega ^1+
\phi^1\wedge \omega^{2})
\end{equation}
 \begin{equation}\label{omega11}
dw+\frac{1}{2} \omega^2\wedge \phi^1 -\frac{1}{2} \omega^1\wedge \phi^2=0,
\end{equation}
 \begin{equation}\label{Phi1}
\Phi^1= d\phi^1 +3\phi^1\wedge w+\omega^1\wedge \psi+\phi\wedge \phi^1  =Q^1\omega\wedge \omega^2,
\end{equation}
 \begin{equation}\label{Phi2}
\Phi^2=d\phi^2 -3\phi^2\wedge w-\omega^2\wedge \psi+\phi\wedge \phi^2=Q^2\omega\wedge \omega^1,
\end{equation}
 \begin{equation}\label{dPsi'}
\Psi := d\psi-\phi^1\wedge \phi^2+2\phi\wedge \psi =(U_1\omega^1+ U_2\omega^2)\wedge \omega.
 \end{equation}
where  $Q^1, Q^2, U^1$ and $U^2$ are functions on $Y$.

 \subsection{Bianchi identities}
 
 In this section we compute Bianchi identities.  They are essential to obtain relations between the curvatures and its derivatives
 and will be heavily used in the reductions of the path structures.

  \subsubsection{}Equation $d(d\phi^1)=0$ obtained differentiating $\Phi^1$ (equation \ref{Phi1}) implies
    \begin{equation}\label{dq1}
   dQ^1 -6Q^1 w +4Q^1\phi=S^1\omega +U_2\omega^1 + T^1\omega^2,
   \end{equation}
where we introduced functions $S^1$ and $T^1$.

   \subsubsection{}
   Anagously, equation $d(d\phi^2)=0$ obtained differentiating $\Phi^2$ (equation \ref{Phi2})
   implies
   \begin{equation}\label{dq2}
   dQ^2 +6Q^2 w +4Q^2\phi=S^2\omega -U_1\omega^2 + T^2\omega^1,
   \end{equation}
   where we introduced new functions $S^2$ and $T^2$.

\subsubsection{} Equation  $d(d\psi)=0$ obtained from equation 
\ref{dPsi'}
implies
 \begin{equation}\label{du1}
 dU_1+5U_1\phi +3U_1 w +Q^2\phi^1=A\omega+B\omega^1+C\omega^2
 \end{equation}
 and
  \begin{equation}\label{du2}
 dU_2+5U_2\phi -3U_2 w -Q^1\phi^2=D\omega+C\omega^1+E\omega^2.
 \end{equation}

\subsubsection{Higher order Bianchi identities}
If we derive equation \ref{dq1}
  and replace the known terms we get  
    \begin{equation}\label{ds1}
   dS^1+6S^1\phi-6S^1 w-U_2\phi^1+T^1\phi^2+4Q^1\psi=X_0\omega+ D\omega^1 +X_2\omega^2
   \end{equation}
    \begin{equation}\label{dt1}
   dT^1+5 T^1\phi-9T^1 w+5Q^1\phi^1=X_2\omega-( S^1- E) \omega^1+Y_2\omega^2
   \end{equation}
   In the same way, if we derive equation \ref{dq2}
  and replace the known terms we get  
 \begin{equation}\label{ds2}
  dS^2+6S^2\phi+6S^2 w-T^2\phi^1-U_1\phi^2+4Q^2\psi=Y_0\omega+Y_1\omega^1 - A\omega^2 
   \end{equation}
    \begin{equation}\label{dt2}
   dT^2+5  T^2\phi+9T^2 w+5Q^2\phi^2=Y_1\omega+X_1\omega^1+( S^2-B) \omega^2
   \end{equation}
If we differentiate equation \ref{dt2}, and use equations \ref{ds2}, \ref{dt2} and \ref{dq2} we get
\begin{equation}\label{ddt2}
\begin{array}{lr}
\omega\wedge(dY_1+7Y_1\phi+9Y_1w-X_1\phi^1+(6S^2-B)\phi^2 +5T^2\psi+5(Q^2)^2\omega^1-Y_0\omega^2)+&\\
\omega^1\wedge(dX_1+6X_1\phi+12X_1w+12T^2\phi^2)-&\\
\omega^2\wedge(dB+6B\phi+6Bw+T^2\phi^1+4U_1\phi^2 -Q^2\psi-2Y_1\omega^1)=0,&
\end{array}
\end{equation}
and from this we get
\begin{equation}\label{dX1}
dX_1+6X_1\phi+12X_1w+12T^2\phi^2=X_{10}\omega+X_{11}\omega^1+X_{12}\omega^2
\end{equation}
\begin{equation}\label{dY1}
dY_1+7Y_1\phi+9Y_1w-X_1\phi^1+(6S^2-B)\phi^2 +5T^2\psi+5(Q^2)^2\omega^1-Y_0\omega^2=Y_{10}\omega+X_{10}\omega^1+Y_{12}\omega^2
\end{equation}

\section{Strict path structures}\label{section:pseudo}
 
 In this section we recall the definition of strict path structures (see \cite{FMMV} and \cite{FVflag}).  They correspond to path structures with a fixed contact form.

 $G_1$ denotes from now on the subgroup of $\SL(3,\R)$ defined by
 $$
 G_1=
 \left\{\left ( \begin{array}{ccc}

                        a     &    0    &   0   \\
                       x    &    \frac{1}{a^2}    &   0\\

                        z       &  y &    a

                \end{array} \right )\ \vert\ {a\in\R^*,(x,y,z)\in\R^3}\right\}
$$ 
and $P_1\subset G_1$ the subgroup defined by
$$
 P_1=\left\{\left ( \begin{array}{ccc}

                        a     &    0    &   0   \\
                       0    &    \frac{1}{a^2}    &   0\\

                        0      &     0 &     a

                \end{array} \right )\right\}.\ \ \
$$
 Writing the Maurer-Cartan form of $G_1$ as the matrix
$$
\left ( \begin{array}{ccc}

                        w     &    0    &   0   \\
                       \theta^1    &    -2w   &   0\\

                       \theta      &  \theta^2   &     w

                \end{array} \right )
$$
one obtains the Maurer-Cartan equations:
$$d \theta+\theta^2\wedge \theta^1=0$$
$$d\theta^1 -3w\wedge \theta^1=0$$
$$d\theta^2 +3w\wedge \theta^2=0$$
$$d w=0.$$

Let $M$ be a three-manifold equipped with a path structure $D=E^1\oplus E^2$.  Fixing a contact form  $\boldsymbol\theta$ such that
$\ker \boldsymbol\theta=D$ defines a strict path structure.
Let $\boldsymbol R$ be the Reeb vector field associated to $\boldsymbol\theta$.  That is $\boldsymbol\theta(\boldsymbol R)=1$ and $d\boldsymbol\theta(\boldsymbol R,\cdot)=0$.
Let  $\boldsymbol X_1\in E^1$, $\boldsymbol X_2\in E^2$ be such that $d\theta(\boldsymbol X_1,\boldsymbol X_2)=1$.  
The dual coframe of $(\boldsymbol X_1,\boldsymbol X_2,\boldsymbol R)$ is $(\boldsymbol\theta^1,\boldsymbol\theta^2,\boldsymbol\theta)$,
for two 1-forms $\boldsymbol\theta_1$ and $\boldsymbol\theta_2$ verifying $d\boldsymbol\theta=\boldsymbol\theta^1\wedge \boldsymbol\theta^2$.   

At any point $x\in M$,
one can look at the coframes of the form
$$
{\theta}^1={a^3}\boldsymbol\theta^1(x), \  \theta^2=\frac{1}{a^3}\boldsymbol\theta^2(x), \ \theta=\boldsymbol\theta(x)
$$
for $a\in\R^*$.
\begin{dfn}
We denote by $\pi: Y^1\rightarrow M$ the  {$\R^*$}-coframe bundle over $M$ 
  given by the set of coframes  $(\theta, \theta^{1}, \theta^{2})$ of the above form.
\end{dfn}

We will denote the tautological forms defined by $\theta^1,\theta^2,\theta$ using the same letters. 
That is, we write $\theta^i$ at the coframe $(\theta^1,\theta^2,\theta)$ to be $\pi^*(\theta^i)$.

\begin{prop}\label{cartan-strict}
There exists a unique $\mathfrak{g}_1$-valued Cartan connection on $Y^1$
$$
\varpi=\left ( \begin{array}{ccc}

                        \upsilon     &    0    &   0   \\
                       \theta^1    &    -2 \upsilon   &   0\\

                       \theta      &  \theta^2   &      \upsilon

                \end{array} \right )
$$ such that its curvature form is of the form
$$
\varpi=\left ( \begin{array}{ccc}

                        d\upsilon     &    0    &   0   \\
                       \theta\wedge \tau^1    &    -2d\upsilon  &   0\\

                       0      &  \theta\wedge \tau^2   &    d\upsilon

                \end{array} \right )
$$
with $\tau^1\wedge \theta^2=\tau^2\wedge \theta^1=0$.
\end{prop}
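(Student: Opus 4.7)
The plan is to construct $\upsilon$ directly in a local trivialization of $Y^1$, show that the curvature normalizations on $\tau^1$ and $\tau^2$ determine it uniquely, and reconcile an apparent overdetermination via the identity $d^2\theta = 0$.

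First, I would note that the equation $d\theta + \theta^2 \wedge \theta^1 = 0$ holds on $Y^1$ automatically: the scaling factors $a^3$ and $a^{-3}$ in the definition of the tautological coframes cancel when forming $\theta^1 \wedge \theta^2$, giving $\theta^1 \wedge \theta^2 = \pi^*(\boldsymbol\theta^1 \wedge \boldsymbol\theta^2) = \pi^* d\boldsymbol\theta = d\theta$. Thus the $(3,1)$-entry of the prescribed curvature vanishes tautologically, and the proof reduces to constructing $\upsilon$.

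A local section of $\pi : Y^1 \to M$ provides a trivialization with fiber coordinate $a$; since any Cartan connection on an $\R^*$-bundle must restrict to the Maurer--Cartan form on fibers, I would make the ansatz $\upsilon = da/a + p\theta + q\theta^1 + r\theta^2$ for unknown smooth functions $p, q, r$ on $Y^1$. Expanding $d\boldsymbol\theta^1$ in the basis $\{\boldsymbol\theta \wedge \boldsymbol\theta^1, \boldsymbol\theta \wedge \boldsymbol\theta^2, \boldsymbol\theta^1 \wedge \boldsymbol\theta^2\}$ and pulling back to $Y^1$, the condition $d\theta^1 - 3\upsilon \wedge \theta^1 = \theta \wedge \tau^1$ with $\tau^1 \wedge \theta^2 = 0$ becomes purely algebraic: the $\theta \wedge \theta^1$-coefficient determines $p$, the $\theta^1 \wedge \theta^2$-coefficient determines $r$, and the surviving $\theta \wedge \theta^2$-coefficient produces $\tau^1$, automatically proportional to $\theta^2$. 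The analogous analysis of $d\theta^2 + 3\upsilon \wedge \theta^2 = \theta \wedge \tau^2$ fixes $q$ and gives a second value for $p$.

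The only delicate step is the compatibility of the two prescriptions for $p$, and this is where the main (though mild) obstacle lies. Differentiating $d\theta = \theta^1 \wedge \theta^2$ yields $0 = d\theta^1 \wedge \theta^2 - \theta^1 \wedge d\theta^2$, and extracting the $\boldsymbol\theta \wedge \boldsymbol\theta^1 \wedge \boldsymbol\theta^2$-coefficient on $M$ gives precisely the algebraic identity between the relevant coefficients of $d\boldsymbol\theta^1$ and $d\boldsymbol\theta^2$ needed for the two values of $p$ to coincide. The form $\upsilon$ is then uniquely determined in the trivialization, and uniqueness propagates: any other local construction must produce the same form on overlaps, so the local pieces glue into a global 1-form on $Y^1$. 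To conclude, the matrix $\varpi$ built from $\upsilon, \theta^1, \theta^2, \theta$ is $\mathfrak{g}_1$-valued by inspection and a genuine Cartan connection because $(\upsilon, \theta^1, \theta^2, \theta)$ is everywhere a coframe of the four-manifold $Y^1$; the required $P_1$-equivariance follows from the transformation law of the tautological forms under the $\R^*$-action together with the uniqueness characterization of $\upsilon$, and the prescribed form of the curvature is immediate by construction.
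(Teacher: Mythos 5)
Your argument is correct. The paper itself states Proposition~3.3 without proof (deferring to \cite{FMMV}), so there is no in-text proof to compare against; your construction --- the ansatz $\upsilon = \frac{da}{a} + p\theta + q\theta^1 + r\theta^2$ forced by the fiber condition, the algebraic determination of $p,q,r$ from the two torsion normalizations, the resolution of the doubly-prescribed coefficient $p$ via $d(d\theta)=0$ (which gives exactly the needed identity $A_1+B_2=0$ between the $\theta\wedge\theta^1$-coefficient of $d\theta^1$ and the $\theta\wedge\theta^2$-coefficient of $d\theta^2$), and the uniqueness/equivariance arguments --- is precisely the standard equivalence-method normalization one expects here, and it checks out.
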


Observe that the condition $\tau^1\wedge \theta^2=\tau^2\wedge \theta^1=0$ implies that we may write
$\tau^1=\tau^1_2\theta^2$ and $\tau^2=\tau^2_1\theta^1$.  The structure equations are
\begin{equation}\label{equations:strict}
\begin{array}{lcl}
d\theta^1 -3\upsilon\wedge \theta^1 & =&\theta\wedge \tau^1,\\
d\theta^2 +3\upsilon\wedge \theta^2&=&\theta\wedge \tau^2,\\
d\theta&=&\theta^1\wedge \theta^2.
\end{array}
\end{equation}

A choice of coframe $(\boldsymbol\theta^1,\boldsymbol\theta^2,\boldsymbol\theta)$  on a strict path structure  defines an embedding
$$
j : M\to Y^1
$$
and therefore 
\begin{equation}\label{equations:jembedding}
\begin{array}{lcl}
d\boldsymbol\theta^1 -3j^*\upsilon\wedge \boldsymbol\theta^1 & =&\boldsymbol\theta \wedge j^*\tau^1,\\
d\boldsymbol\theta^2 +3j^*\upsilon\wedge \boldsymbol\theta^2&=&\boldsymbol\theta\wedge j^*\tau^2,\\
d\boldsymbol\theta&=&\boldsymbol\theta^1\wedge \boldsymbol\theta^2.
\end{array}
\end{equation}

\subsection{Bianchi identities}
In what follows,
the equations should be understood as definitions for
the coefficients appearing in the right hand terms.
Bianchi identities give the following equations:

 \begin{equation}\label{dtheta11}
d \upsilon=R\theta^1\wedge \theta^2+W^1\theta^1\wedge\theta+W^2\theta^2\wedge\theta
\end{equation}
\begin{equation}\label{dtau1}
d\tau^1+3\tau^1\wedge \upsilon=3W^2\theta^1\wedge \theta^2+S^1_1\theta\wedge\theta^1+S^1_2\theta\wedge\theta^2
\end{equation}
\begin{equation}\label{dtau2}
d\tau^2-3\tau^2\wedge \upsilon=3W^1\theta^1\wedge \theta^2+S^2_1\theta\wedge\theta^1+S^2_2\theta\wedge\theta^2
\end{equation}
Moreover, we have the relation
$$
S^1_1=S^2_2=\tau^1_2\tau^2_1.
$$

 From equation $dd\upsilon=0$ one obtains that 
 $$
 dR=R_0\theta +R_1\theta^1+R_2\theta^2,
 $$
\begin{equation}\label{dW1}
 dW^1+3W^1\upsilon=W^1_0\theta+W^1_1\theta^1+W^1_2\theta^2
 \end{equation}
 and
 \begin{equation}\label{dW2}
  dW^2-3W^2\upsilon=W^2_0\theta+W^2_1\theta^1+W^2_2\theta^2
 \end{equation}

 with
 $$
 R_0=W^1_2-W^2_1.
 $$

 Also, 
 writing $dR_0=R_{00}\theta+R_{01}\theta^1+R_{02}\theta^2$, one gets
\begin{equation}\label{dR1}
dR_1+3R_1\upsilon+R_2\tau^2_1\theta-\frac{1}{2}R_0\theta^2=R_{01}\theta+R_{11}\theta^1+R_{12}\theta^2
 \end{equation}
 and
 \begin{equation}\label{dR2}
dR_2-3R_2\upsilon+R_1\tau^1_2\theta+\frac{1}{2}R_0\theta^1=R_{02}\theta+R_{12}\theta^1+R_{22}\theta^2.
 \end{equation}
 We have moreover
 $$
 d\tau^1_2-6\tau^1_2\upsilon=3W^2\theta^1 +S^1_2\theta \mod \theta^2
 $$
 and
 $$
 d\tau^2_1+6\tau^2_1\upsilon=-3W^1\theta^2 +S^2_1\theta \mod \theta^1.
 $$



\section{Reductions}\label{section:reduction}

In this section we prove the main theorem concerning canonical reductions of a path structure.  The motivation behind the reductions  is a gap theorem on the possible dimensions of the automorphism group.  Indeed,
the group of transformations preserving a path structure which is not flat has dimension at most three:   

\begin{thm}[M. A. Tresse \cite{tresse}]
The group of transformations of the fiber bundle $Y$ has  dimension eight (in the flat case) or at most dimension three.
\end{thm}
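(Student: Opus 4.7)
The plan is to exploit the fact that the Cartan connection $\pi$ provides an absolute parallelism on the $8$-dimensional manifold $Y$, so $\mathrm{Aut}(Y)$ acts freely and \emph{a priori} $\dim\mathrm{Aut}(Y)\le 8$; the two cases of the theorem correspond to whether the reduction theorem just proved cuts this bound down. Concretely, the eight scalar components of $\pi$ form a global coframe of $Y$, and any automorphism $\Phi\colon Y\to Y$ of the Cartan geometry satisfies $\Phi^{*}\pi=\pi$ and is therefore determined by the image of a single point.

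For the flat upper bound, I would argue that when $Q^1=Q^2=0$ the uniqueness part of the existence theorem for the Cartan connection identifies $(Y,\pi)$ locally with $(\SL(3,\R),\pi_{MC})$, where $\pi_{MC}$ is the Maurer--Cartan form. Since left multiplication by $\SL(3,\R)$ on itself preserves $\pi_{MC}$ and is simply transitive, the local automorphism pseudogroup has dimension $8$; combined with the freeness bound above this gives equality.

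For the non-flat case I would invoke the canonical reduction theorem (Theorem 1.1 of the introduction, proved in this section). At a point where $Q^1$ or $Q^2$ does not vanish, the theorem produces, in a neighbourhood of its image in $M$, a canonical sub-bundle $Y'\hookrightarrow Y$ with structure group $\Z/2\Z$. Because $Y'$ is cut out by canonical conditions on the curvature functions and their transformation rules (equations \eqref{transformationsQ}--\eqref{transformationsU}), any element of $\mathrm{Aut}(Y)$ preserves $Y'$ and therefore restricts to a diffeomorphism of $Y'$ still preserving the parallelism inherited from $\pi$. As $Y'$ is a $\Z/2\Z$-cover of $M$, its dimension equals $\dim M=3$, so the free action of $\mathrm{Aut}(Y)$ on $Y'$ forces $\dim\mathrm{Aut}(Y)\le 3$.

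The main obstacle is the local-to-global step: $Y'$ is only guaranteed to exist on the open locus where some component of the curvature is non-zero, while $\mathrm{Aut}(Y)$ acts on all of $Y$. However, the parallelism rigidity resolves this cleanly. Any automorphism is determined by its value at a single point of $Y$, so the orbit map at a point $y\in Y'$ is an immersion whose image lies in $Y'$ near $y$; hence the orbit has dimension at most $\dim Y'=3$, which yields the global bound $\dim\mathrm{Aut}(Y)\le 3$ whenever the geometry is not flat.
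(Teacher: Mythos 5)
Your proposal is correct and follows essentially the same route as the paper: the authors deduce Tresse's theorem from the canonical reduction to a $\Z/2\Z$-structure (Theorem \ref{thm-reduction}), exactly as you do, with the flat case handled by the local identification with the model $\SL(3,\R)/B_\R$. You merely make explicit the standard parallelism-rigidity facts (automorphisms determined by one point, orbits confined to the three-dimensional reduced bundle) that the paper leaves implicit when it states that the reduction "clearly implies Tresse's theorem."
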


A modern proof is contained in M.  Mion-Mouton thesis \cite{MM} and a  more general result is obtained in  \cite{gapphenomenon}.

We will describe in this section reductions of the fiber bundle $Y$ to a parallelism or a $\Z/2\Z$-bundle over the manifold in the case the path structure is not flat.  This clearly implies Tresse's theorem. 

\begin{thm}\label{thm-reduction}
If the path structure is not flat, there exists a canonical reduction of the fiber bundle $Y$ to  a $\Z/2\Z$-structure.  Moreover,
\begin{enumerate}
\item if  $Q^1\neq 0$, and $Q^2\neq 0$, and $T_1\neq 0$ or $T_2\neq 0$, there exists a further reduction to a parallelism.
\item  if  $Q^1= 0$ and $Q^2\neq 0$ and $Y_1\neq 0$  there exists a further reduction to a parallelism.
\end{enumerate}
Here $Q^1$, $Q^2$,
$T_1$, $T_2$ and  $Y_1$ are the functions on the bundle $Y$ introduced in equations
\ref{Phi1}, \ref{Phi2},
\ref{dq1}, \ref{dq2} and \ref{ds2}.
\end{thm}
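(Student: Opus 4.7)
The plan is to reduce the 5-dimensional structure group $H$ of $Y\to M$ (with coordinates $(a,b,c,e,f)$ via its identification with the Borel subgroup $B\subset \SL(3,\R)$) by normalizing a chain of curvature invariants, exploiting the primary transformation laws \ref{transformationsQ}, \ref{transformationsU}, \ref{adjunta} together with those of higher-order invariants produced by the Bianchi identities.

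I would begin with the $\Z/2\Z$-reduction in the generic case $Q^1,Q^2\neq 0$, then indicate how the remaining nonflat subcases are treated. From $\tilde Q^1 = ab^5 Q^1$ and $\tilde Q^2 = (a^5b)^{-1}Q^2$, normalizing to prescribed values fixes $ab^5$ and $a^5b$; because $b^4>0$ and $a^4>0$, the signs of $ab^5, a^5b, ab, a/b$ all coincide, so $ab$ and $a/b$ are determined in both magnitude and sign. This fixes $(a,b)$ uniquely up to the simultaneous sign flip $(a,b)\mapsto(-a,-b)$, which is the only element of the diagonal stabilizing both normalizations since it satisfies $(-1)(-1)^5=1=(-1)^{-5}(-1)^{-1}$. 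The relations $\tilde U_1=0$ and $\tilde U_2=0$ then fix $f=bU_1/Q^2$ and $c=-U_2/(ab Q^1)$ respectively, using $Q^2\neq 0$ and $Q^1\neq 0$. Finally, to eliminate $e$, I expand the gauge transform of \ref{dq1}: the piece $4\tilde Q^1\tilde\phi$ contributes a term $-4ab^4 eQ^1\,\omega$ coming from the $-\tfrac{e}{b}\omega$ component of $\tilde\phi$ in \ref{adjunta}, so $\tilde S^1$ depends linearly on $e$ with coefficient proportional to $Q^1\neq 0$, and imposing $\tilde S^1=0$ determines $e$ uniquely, completing the reduction to a $\Z/2\Z$-structure. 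When $Q^1$ or $Q^2$ vanishes identically, the corresponding step is replaced by normalization of a higher-order invariant (for instance $\tilde T^2$ in place of $\tilde U_2$ when $Q^1=0$, using \ref{dq2}), and nonflatness guarantees that some such chain of normalizations is available.

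For the parallelism assertions, the residual $\Z/2\Z$-action on the reduced bundle is generated by $h_0=\mathrm{diag}(-1,1,-1)\in H$, corresponding to $(a,b,c,e,f)\mapsto(-a,-b,c,-e,-f)$, which one checks preserves each of the above normalizations. I would then compute $\tilde T^1$ from the $\omega^2$-coefficient of the gauge transform of \ref{dq1}, obtaining $\tilde T^1=a^2 b^7 T^1-5a^2 b^6 fQ^1$; under $h_0$, the first term carries $b^7$ and the second carries $b^6$ together with $f\mapsto -f$, so both terms change sign and $\tilde T^1\mapsto -\tilde T^1$. Hence in case (1), when $T^1\neq 0$ (or symmetrically $T^2\neq 0$ using \ref{dq2}), requiring $T^1>0$ breaks the last $\Z/2\Z$-ambiguity and selects a canonical point in each fiber, yielding the parallelism. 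Case (2), with $Q^1=0$, $Q^2\neq 0$, $Y_1\neq 0$, is analogous: the role of $\tilde U_2$ in fixing $c$ is played by $\tilde T^2$ from \ref{dq2} (whose dependence on $c$ has coefficient $-5abQ^2\neq 0$), and the final $\Z/2\Z$-breaking invariant is $Y_1$ from the higher Bianchi identity \ref{ds2}, whose transformation one verifies to have odd weight under $h_0$.

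The main obstacle is the careful bookkeeping of how each higher-order invariant transforms once the preceding normalizations have been imposed: one must check that the coefficient of the parameter being solved for is genuinely nonvanishing under the stated hypotheses, and crucially that the chosen $\Z/2\Z$-sensitive invariant ($T^1$, $T^2$ in case (1); $Y_1$ in case (2)) really does carry odd weight under $h_0$ after the normalizations of $Q^1,Q^2,U_1,U_2,S^1$ (and $c,e,f$) have been substituted in. This is a systematic but intricate calculation requiring the full Bianchi identities \ref{ds1}, \ref{dt1}, \ref{ds2}, \ref{dt2}, \ref{dX1}, \ref{dY1} together with the transformation formulas \ref{adjunta}.
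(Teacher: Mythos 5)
Your treatment of the case $Q^1\neq 0$, $Q^2\neq 0$ is essentially the paper's argument: normalizing $Q^1,Q^2$ fixes $(a,b)$ up to the simultaneous sign flip, $U_1,U_2$ fix $f,c$, the linear $e$-dependence of $S^1$ fixes $e$ (the paper imposes $S^1+\epsilon S^2=0$ rather than $S^1=0$, but explicitly notes that $S^1=0$ is an equally valid choice), and the residual $\Z/2\Z$ is broken by the sign of $T^1$ or $T^2$; your weight computation $\tilde T^1=a^2b^7T^1-5a^2b^6fQ^1$ is correct and gives the required sign change under $h_0=\mathrm{diag}(-1,1,-1)$.

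The genuine gap is in the case $Q^1=0$, $Q^2\neq 0$, where the unconditional $\Z/2\Z$-reduction still has to be produced. Normalizing $Q^2=1$ only constrains the product $a^5b$, so the residual structure group after this step is the one-parameter group $\{\mathrm{diag}(a,a^4,a^{-5})\}\cong\R^*$, not a $\Z/2\Z$ generated by $h_0$. The remaining first-order normalizations $U_1=0$, $T^2=0$, $S^2=0$ fix $f$, $c$, $e$ respectively but leave $a$ completely free. To cut this $\R^*$ down to $\Z/2\Z$ (or to $\{e\}$) one must normalize a second-order invariant of nonzero weight, and the essential point is to prove that such an invariant is available: this is the paper's Proposition \ref{pg}, which uses the second-order Bianchi identity \ref{ddt2} to show that $X_1$ (weight $a^6$) and $Y_1$ (weight $a^{-3}$) cannot vanish simultaneously --- if they did, \ref{ddt2} would collapse to $5\,\omega\wedge\omega^1=0$, a contradiction. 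Your sentence ``nonflatness guarantees that some such chain of normalizations is available'' asserts exactly this without proof, and it does not follow from nonflatness alone, which here only gives $Q^2\neq 0$. Relatedly, your description of $Y_1$ as ``the final $\Z/2\Z$-breaking invariant'' miscounts the residual group: when $Y_1\neq 0$ the normalization $Y_1=1$ has odd weight $a^{-3}$ and therefore kills the entire $\R^*$ at once, yielding the parallelism of case (2) directly, while the $\Z/2\Z$-structure in the subcase $Y_1=0$ comes from normalizing $|X_1|=1$, whose weight $a^6$ is even.
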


The theorem is a consequence of propositions  \ref{rq1q2}, \ref{rq2}, \ref{proposition:reduction}, \ref{pg} and \ref{rY}, where details of the reductions are given.  Note that the case  $Q^1\neq 0$ and $Q^2= 0$ differs from the second case in the theorem by an ordering of the 
decomposition of the plane field and the result is analogous.

\subsection{Reduction of the structure: $Q^1\neq 0$ and $Q^2\neq 0$.}

Suppose there exists a section of the coframe bundle $Y$ with $Q^1\neq 0$ and $Q^2\neq 0$. From  equations \ref{transformationsQ}, $ab^5 Q_1=\tilde Q_1$, $a^5b\tilde Q_2=Q_2$,  we can solve for $a,b$ such  that $\tilde Q_1=1$ and $\tilde Q_2=\epsilon$, where $Q_1Q_2=\epsilon |Q_1Q_2|$.   Observe that if we only consider coframes on $Y$ satisfying $ Q_1=1$, $ Q_2=\epsilon$, then $a=b=\pm 1$.

From transformation properties \ref{transformationsU}:
\begin{equation}\label{U1}
\tilde U_1=\frac{b}{a^4}(U_1-\frac{f}{b}Q^2)
\end{equation}
and
$$
\tilde U_2=\frac{b^4}{a}( U_2+abcQ^1),
$$
we can choose $c=-\frac{U_2}{abQ^1}$ and $f=\frac{bU_1}{Q^2}$ such that $\tilde U_1=\tilde U_2=0$.
 Then, from equations \ref{dq1} and \ref{dq2}, that is
$$
 d\tilde Q^1 -6\tilde Q^1\tilde w +4\tilde Q^1\tilde\phi=\tilde S^1\tilde\omega +\tilde U_2\tilde \omega^1 + \tilde T^1\tilde \omega^2
$$
and
$$
   d\tilde Q^2 +6\tilde Q^2 \tilde w +4\tilde \tilde Q^2\tilde \phi=\tilde S^2\tilde \omega -\tilde U_1\tilde \omega^2 + \tilde T^2\tilde \omega^1,
$$
we get
\begin{equation}\label{f-w}
4\tilde \phi -6\tilde w=\tilde S^1\tilde\omega  + \tilde T^1\tilde \omega^2,
\end{equation}
\begin{equation}\label{f+w}
 \epsilon(4\tilde \phi+6\tilde w )=\tilde S^2\tilde \omega + \tilde T^2\tilde \omega^1.
\end{equation}

Consider now only the coframes on $Y$ satisfying $ Q_1=1$, $ Q_2=\epsilon$ and $ U_1= U_2=0$.
Using formulas \ref{adjunta} with $a=b=\pm 1, c=f=0$ (these parameters are fixed by the previous conditions) we obtain that
$\tilde w=w$ and $\tilde \phi=\phi-e\omega$.
Therefore

$$
4( \phi -e\omega) -6w=\tilde S^1\omega  + \tilde T^1\omega^2
$$
and
$$
4( \phi -e\omega) +6w=\tilde S^2 \omega + \tilde T^2 \omega^1.
$$
From these equations we observe that $\phi$ and $w$ are combinations of 
$\omega, \omega^1$ and $ \omega^2$.
We also obtain  that 
$$
\tilde S^1=S^1 +4e
$$
and
$$
\tilde S^2=S^2 +\epsilon4e.
$$

\begin{prop}\label{rq1q2}
Suppose there is a section of $Y$ such that $Q^1\neq 0$ and  $Q^2\neq 0$.  Then there exists a unique $\Z/2\Z$-bundle $Y^{red}\subset Y$ such that on $Y^{red}$, 
$Q^1=1$, $Q^2=\epsilon$, 
$U_1=U_2=0$ and $S^1+\epsilon S^2=0$, where  $Q^1Q^2 = \epsilon |Q^1Q^2|$.  Moreover, if $T_1\neq 0$ or $T_2\neq 0$ there is a further reduction to a parallelism.
\end{prop}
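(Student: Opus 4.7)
The plan is to execute the sequential normalization begun in the computations preceding the proposition. Each of the five continuous parameters $a,b,c,f,e$ of the structure group $H$ is used to kill one scalar invariant among $Q^1,Q^2,U_1,U_2,S^1+\epsilon S^2$, after which only the residual $\Z/2\Z$ remains. Uniqueness at each step will follow from the explicit invertibility of the normalization equations.

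First, the rules \eqref{transformationsQ}, $\tilde Q^1=ab^5Q^1$ and $\tilde Q^2=Q^2/(a^5b)$, are jointly solvable over the reals exactly when $Q^1Q^2\neq 0$; setting $\epsilon=\mathrm{sign}(Q^1Q^2)$, the conditions $\tilde Q^1=1$, $\tilde Q^2=\epsilon$ force $ab^5=a^5b=1$, whence $a=b=\pm 1$, and cut $Y$ down to a sub-bundle $Y_1$ with residual freedom $a=b=\pm 1$ and $c,f,e$ arbitrary. On $Y_1$ the rules \eqref{transformationsU} uniquely solve $f=bU_1/\epsilon$ and $c=-U_2/(ab)$ for the normalization $\tilde U_1=\tilde U_2=0$, producing $Y_2\subset Y_1$ with residual freedom $a=b=\pm 1,\ c=f=0,\ e\in\R$. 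Finally, from \eqref{dq1}--\eqref{dq2} restricted to $Y_2$ the text has already derived $\tilde S^1=S^1+4e$ and $\tilde S^2=S^2+4\epsilon e$; hence $\tilde S^1+\epsilon\tilde S^2=(S^1+\epsilon S^2)+8e$, so the condition $S^1+\epsilon S^2=0$ pins down $e$ uniquely. The resulting set $Y^{red}$ is a smooth $\Z/2\Z$-sub-bundle of $Y$ by the implicit function theorem, since the Jacobians of the successive defining equations with respect to the eliminated parameters are non-zero on the relevant loci. This proves the first assertion.

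For the second claim, I analyze the action of the non-trivial element $\sigma\in\Z/2\Z$, corresponding to $a=b=-1,\ c=f=e=0$. From \eqref{adjunta} one reads $\sigma^*\omega=\omega$, $\sigma^*\omega^i=-\omega^i$, $\sigma^*w=w$, $\sigma^*\phi=\phi$. Pulling back \eqref{dq1} to $Y^{red}$ (where $Q^1=1$ and $U_2=0$) gives $-6w+4\phi=S^1\omega+T^1\omega^2$; applying $\sigma^*$ forces $\sigma^*T^1=-T^1$, and symmetrically $\sigma^*T^2=-T^2$ from \eqref{dq2}. Hence if $T^1$ (resp.\ $T^2$) is non-zero at $p\in Y^{red}$, the sign condition $T^1>0$ (resp.\ $T^2>0$) singles out one of the two sheets of $Y^{red}$ over a neighborhood of $\pi(p)\in M$, defining the desired parallelism.

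The main obstacle is bookkeeping: the transformation laws through the successive reductions (in particular how the surviving $e$-parameter acts on $S^1\pm\epsilon S^2$ once $c,f$ have been fixed) must be tracked carefully to guarantee compatibility of the normalizations. The hardest algebra has already been carried out in the text, and the $\Z/2\Z$-action on $T^1,T^2$ is then an immediate consequence of the sign flips in \eqref{adjunta}.
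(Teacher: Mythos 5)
Your proposal follows the paper's own argument essentially verbatim: normalize $Q^1=1$, $Q^2=\epsilon$ using $a,b$ (leaving $a=b=\pm1$), kill $U_1,U_2$ with $f,c$, fix $e$ by $S^1+\epsilon S^2+8e=0$, and then use the sign of $T^1$ or $T^2$ (which is reversed by the residual $\Z/2\Z$ since $\omega^1,\omega^2$ change sign while $\omega,\phi,w$ do not) to select one of the two coframes. The approach and all key computations coincide with the paper's proof, your treatment of the final $\Z/2\Z$-action being only slightly more explicit.
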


\Pf
Observe that $\tilde S^1+\epsilon \tilde S^2=S^1+\epsilon S^2+8e$ and choose the unique $e$
satisfying $\tilde S^1+\epsilon \tilde S^2=0$.  This defines the reduction to a $\Z/2\Z$-bundle $Y^{red}\subset Y$.

From \ref{f-w} and \ref{f+w} we get on $Y^{red}$
$$
\phi=\frac 1 8(T^1\omega^2+\epsilon T^2\omega^1)
$$
and
$$
w=\frac{1}{12}((\epsilon S^2-S^1) \omega+\epsilon T^2\omega^1-T^1\omega^2)
$$
Since both forms are invariant on $Y^{red}$, fixing the sign of $T^1$ or $T^2$ determines one of the two
coframes. That is, either $(\omega^1, \omega^2,\omega)$ or  $(-\omega^1, -\omega^2,\omega)$.

\EPf

The structure equations of a parallelism can be written as follows.
From equations \ref{du1} and \ref{du2} we get on $Y^{red}$
$$
 \phi^1=\epsilon(A\omega+B\omega^1+C\omega^2)
 $$
  and
$$
   \phi^2=-(D\omega+C\omega^1+E\omega^2).
 $$
Therefore the structure equations are
 \begin{equation}\label{dif1}
 d\omega=\frac 1 4( T^1\omega^2+\epsilon T^2\omega^1)\wedge\omega+\omega^1\wedge\omega^2, 
 \end{equation}
 \begin{equation}\label{dif2}
 d\omega^1=\frac 1 8 T^1\omega^1\wedge\omega^2+(\frac 1 4(\epsilon S^2-S^1)+\epsilon B)\omega\wedge\omega^1+\epsilon C \omega\wedge\omega^2
 \end{equation}
 \begin{equation}\label{dif3}
 d\omega^2=-\epsilon\frac 1 8 T^2\omega^1\wedge\omega^2+C \omega\wedge\omega^1+(-\frac 1 4(\epsilon S^2-S^1)+E)\omega\wedge\omega^2.
 \end{equation}

As a final observation, other possible reductions to a $\mathbb Z^2$-bundle could be made. For instance, imposing the condition $S^1=0$ defines such a reduction. In any case, the theorem shows that 
in the case both $Q^1$ and $Q^2$ are non-vanishing, the dimension of the automorphism group 
is at most three. 

\subsection{Reduction of the structure: $Q^1=0$ and $Q^2\neq 0$.}

Suppose there exists a section of the coframe bundle $Y$ with $Q^1=0$ and $Q^2\neq 0$.
From equation \ref{dq1},
$$
  dQ^1 -6Q^1 w +4Q^1\phi=S^1\omega +U_2\omega^1 + T^1\omega^2,
$$   
 we obtain that $S^1=U_2=T^1=0$ on $Y$. Also, from equation \ref{du2} we obtain $D=C=E=0$,  and from \ref{ds1} and \ref{dt1}, 
 $X_0=X_2=Y_2=0$.

 Using the transformation $\tilde Q^2=\frac{1}{a^5b}Q^2$ one can chose an arbitrary function $a$ and then the function $b$
 such that  $\frac{1}{a^5b}Q^2=1$ is determined.  
 One considers now the subbundle with  coframes satisfying $Q^2=1$. Its structure group satisfies then $a^5=\frac{1}{b}$. From equation \ref{U1}, as in the previous section,
\begin{equation}
\tilde U_1=\frac{b}{a^4}(U_1-\frac{f}{b}Q^2)=\frac{b}{a^4}(U_1-fa^5),
\end{equation}
 and one can choose $f$ (depending on $a$) such that $U_1=0$.
 
 The structure group of this reduction consists of matrices of the form
 $$
\left( \begin{array}{ccc}

                        a  	&    	c      	&       e \\

                       0				 &     {a^4}		&		0\\
                       0   				&  	0 				&		\frac{1}{a^5}
                      
       \end{array} \right )
$$
Compute now the transformation by a section of this structure group of the pull-back of the connection forms  to this subbundle.
We have $ R^*_h(\pi)=h^{-1}dh+h^{-1}\pi h$.  The computation of $h^{-1}\pi h$ is given above in formulae  \ref{adjunta}.   We also compute
\begin{equation}\label{hdh}
h^{-1}dh=\left( \begin{array}{ccc}

                        a^{-1}da  	&    	\star      	&       \star \\

                       0				 &    -a^{-1}da -b^{-1}db		&		\star\\
                       0   				&  	0 				&		b^{-1}db
                      
       \end{array} \right ).
\end{equation}
Therefore, taking into account that $b=\frac{1}{a^5}$ (so $b^{-1}db=-5\frac{da}{a}$) and $f=0$ in \ref{adjunta},
$$
\tilde w =R^*_h(w)=\frac{1}{2}(a^{-1}da +b^{-1}db)+ w-\frac{1}{2}abc\,\omega^1=-2a^{-1}da + w-\frac{1}{2}ca^{-4}\,\omega^1
$$
and
$$
\tilde\phi= R^*_h(\phi)=\frac{1}{2}(a^{-1}da -b^{-1}db) +\phi-\frac{1}{2}abc\,\omega^1-\frac{e}{b}\,\omega=3a^{-1}da +\phi-\frac{1}{2}ca^{-4}\,\omega^1-{e}a^5\,\omega.
$$

Observe now that

$$
6\tilde w+4\tilde\phi= 6w+4\phi -5ca^{-4}\,\omega^1-{e}a^5\,\omega
$$
Replacing equation \ref{dq2}:
$dQ^2 +6Q^2 w +4Q^2\phi=S^2\omega -U_1\omega^2 + T^2\omega^1$,
(imposing $Q^2=1$ and $U_1=0$) on the last equation we obtain
$$
6\tilde w+4\tilde\phi= (T^2 -5ca^{-4})\,\omega^1+(S^2-{e}a^5)\,\omega.
$$
We may now choose $c$ and $e$ so that 
$$
6\tilde w+4\tilde\phi= 0.
$$
That is, $\tilde T^2=\tilde S^2=0$.

The coframes with $Q^2=1$ and $6 w+4 \phi= 0$ form a subbundle $Y^1$ with group
$$
H_1=\left\{
\left( \begin{array}{ccc}

                        a  	&    	0      	&       0 \\

                       0				 &    a^4 		&		0\\
                       0   				&  	0 				&		a^{-5}
                      
       \end{array} \right ) \ \vert \ a\neq 0\ \right\}\subset H.
$$
From equation \ref{du1} we obtain that on $Y^1$ 
\begin{equation}\label{tildephi1}
\phi^1=A\omega+B\omega^1.
\end{equation}
From equation \ref{ds2} we obtain that on $Y^1$
\begin{equation}\label{tildepsi}
4\psi=Y_0\omega+Y_1\omega^1 - A\omega^2.
\end{equation}
Also, from equation \ref{dt2} we obtain on $Y^1$

\begin{equation}\label{tildephi2}
5\phi^2=Y_1\omega+X_1\omega^1-B \omega^2.
\end{equation}

We showed
\begin{prop}\label{rq2}
There exists a unique $\R^*$-bundle $Y^1\subset Y$ such that on $Y^1$, $Q^1=0$, 
$Q^2=1$, $U_1=0$ and $3 w=-2\phi$.  Moreover on $Y^1$ we have $C=D=E=S^1=S^2=T^1=T^2=U_2=0$ and $X_2=Y_2=0$, also $\phi^1=A\omega+B\omega^1$, $5\phi^2=Y_1\omega+X_1\omega^1-B \omega^2$ and $4\psi=Y_0\omega+Y_1\omega^1 - A\omega^2$.
\end{prop}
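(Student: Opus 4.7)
The plan is to execute the proposition in three stages: propagate vanishing through the Bianchi identities to kill a long list of functions on the full bundle $Y$; then perform three successive normalizations of $Q^2$, $U_1$, and a combination of $w$ and $\phi$, tracking which transformation parameters get pinned at each step; finally read off the explicit expressions for the remaining connection forms on the resulting subbundle by substituting the known zeros into the Bianchi identities.

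For the first stage, since $Q^1\equiv 0$ identically on $Y$, the identity \ref{dq1} reduces to $S^1\omega+U_2\omega^1+T^1\omega^2=0$, and linear independence of $\omega,\omega^1,\omega^2$ forces $S^1=U_2=T^1=0$. Feeding $U_2=0$ and $Q^1=0$ into \ref{du2} yields $C=D=E=0$. Similarly \ref{ds1} and \ref{dt1}, specialized by the zeros just obtained, give $X_0=X_2=Y_2=0$. None of this uses the structure group.

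For the normalizations: by \ref{transformationsQ}, $\tilde Q^2=a^{-5}b^{-1}Q^2$, so imposing $\tilde Q^2=1$ restricts to a subbundle with residual constraint $b=a^{-5}$. On this subbundle \ref{transformationsU} becomes $\tilde U_1=a^{-9}(U_1-fa^5)$, uniquely solvable for $f$ to enforce $\tilde U_1=0$. Finally, the adjoint formulae \ref{adjunta} with $b=a^{-5}$ and $f=0$ give
\[
6\tilde w+4\tilde\phi=(6w+4\phi)-5ca^{-4}\omega^1-ea^5\omega,
\]
while \ref{dq2} at $Q^2=1, U_1=0$ reduces to $6w+4\phi=S^2\omega+T^2\omega^1$. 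A unique choice of $(c,e)$ (depending on $a$) thus achieves $6\tilde w+4\tilde\phi=0$ and simultaneously forces $\tilde S^2=\tilde T^2=0$. With $b,f,c,e$ all determined in terms of $a$, the residual fiber parameter is $a\in\R^*$, so $Y^1\subset Y$ is a principal $\R^*$-bundle with structure group $H_1$ as displayed.

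The remaining assertions are immediate by substitution on $Y^1$: \ref{du1} at $U_1=0, Q^2=1, C=0$ gives $\phi^1=A\omega+B\omega^1$; \ref{dt2} at $T^2=S^2=0, Q^2=1$ gives $5\phi^2=Y_1\omega+X_1\omega^1-B\omega^2$; and \ref{ds2} at $S^2=T^2=U_1=0, Q^2=1$ gives $4\psi=Y_0\omega+Y_1\omega^1-A\omega^2$. The main obstacle is purely bookkeeping: one must verify that the three normalizations are mutually compatible and that no earlier fixing of $b$ is undone when $(c,e,f)$ are chosen. The clean separation — $b$ solely from $Q^2=1$, $f$ solely from $U_1=0$, and $(c,e)$ solely from $6w+4\phi=0$ — makes both existence and uniqueness of the reduction transparent.
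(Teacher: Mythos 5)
Your proposal is correct and follows essentially the same route as the paper: first propagating the vanishing of $Q^1$ through the Bianchi identities \ref{dq1}, \ref{du2}, \ref{ds1}, \ref{dt1} to kill $S^1,U_2,T^1,C,D,E,X_0,X_2,Y_2$, then normalizing $Q^2=1$ (fixing $b=a^{-5}$), $U_1=0$ (fixing $f$), and $6w+4\phi=0$ (fixing $c,e$, which simultaneously gives $S^2=T^2=0$), and finally reading off $\phi^1$, $5\phi^2$, $4\psi$ from \ref{du1}, \ref{dt2}, \ref{ds2}. The only point treated more explicitly in the paper is that the gauge change by a non-constant section contributes $h^{-1}dh$ terms to $\tilde w$ and $\tilde\phi$ individually, which cancel precisely in the combination $6\tilde w+4\tilde\phi$; your displayed formula already reflects that cancellation.
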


\subsubsection{Further reductions  of the structure: $Q^1=0$ and $Q^2\neq 0$.}

From equations \ref{adjunta}, taking into account Proposition \ref{rq2}, we obtain on $Y^1$ the transformations
\begin{align}
\tilde\omega  &= a^6\, \omega\notag\\
\tilde{\omega}^1 &=a^{-3}\, \omega^1\notag\\
\tilde\omega^2 &=a^{9}\,\omega^2\notag\\
\tilde\phi^1 &=b^2a\, \phi^1=a^{-9}\, \phi^1\notag\\
\tilde\phi^2 &=\frac{1}{ba^2}\, \phi^2=a^3\, \phi^2\notag\\
\tilde\psi &=\frac{b}{a}\,\psi=\frac{1}{a^6}\,\psi.
\end{align}

Therefore, from Proposition \ref{rq2}, we compute the transformations of $X_1,Y_0, Y_1,A$ and $B$:
For instance, from $\tilde \phi^2=a^3\, \phi^2$ we have
$$
\tilde Y_1a^{6} \omega+\tilde X_1 a^{-3} \omega^1-\tilde B a^{9} \omega^2=5\tilde \phi^2=a^3\, 5\phi^2=a^3(Y_1 \omega+X_1\omega^1-B\omega^2)
$$
and comparing the terms of  this equality we obtain

\begin{align}
\tilde X_1 &= a^6X_1\notag\\
\tilde Y_1 &= a^{-3}Y_1\notag\\
\tilde B &= a^{-6}B.
\end{align}

Analogously, we also obtain,  from $\tilde \psi=a^{-6}\, \psi$
$$
\tilde Y_0a^6\, \omega+\tilde Y_1a^{-3}\, \omega^1 -\tilde Aa^{9}\,\omega^2=4\tilde \psi=4\frac{1}{a^6}\,\psi=\frac{1}{a^6}(Y_0\omega+Y_1\omega^1 - A\omega^2)
$$
and therefore
\begin{align}
\tilde Y_0 &= a^{-{12}}Y_0\notag\\ 
\tilde A &= a^{-{15}}A.
\end{align}

Remark that if any of the coefficients $X_1,Y_0, Y_1,A$ or $B$ is non zero one could reduce the bundle by fixing the function $a$ (only up to a sign if $Y_1=0$ {{and $A=0$}}) so that the coefficient be constant equal to one.  Now we compute equation \ref{ddt2}, taking into account Proposition \ref{rq2},
and supposing that $X_1=Y_0= Y_1=A=0$, we obtain
$$
5\omega\wedge \omega^1=0,
$$
which is a contradiction.  We obtained therefore the following

\begin{prop} \label{proposition:reduction} If  $Q^1=0$ and $Q^2\neq 0$ there exists a canonical reduction of the path structure  to a $\Z/2\Z$-structure.  Moreover, if $Y_1\neq  0$  or $A\neq 0$, one can reduce further to a parallelism.
\end{prop}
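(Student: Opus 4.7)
The plan is to build on Proposition \ref{rq2} and work on the residual $\R^*$-bundle $Y^1 \to M$ it produces. On $Y^1$ the structure group is the one-parameter subgroup $a \in \R^*$ (with $b = a^{-5}$ and $c = f = e = 0$ in the original parametrization of $H$), and the connection is determined by the five surviving coefficients $A, B, X_1, Y_0, Y_1$ through the formulas $\phi^1 = A\omega + B\omega^1$, $5\phi^2 = Y_1\omega + X_1\omega^1 - B\omega^2$, and $4\psi = Y_0\omega + Y_1\omega^1 - A\omega^2$. First I would read off the weights of these five functions under the residual $\R^*$-action, combining the formulae \ref{adjunta} (specialized to $b = a^{-5}$, $c = f = e = 0$) with $\tilde\phi^1 = a^{-9}\phi^1$, $\tilde\phi^2 = a^3\phi^2$, $\tilde\psi = a^{-6}\psi$. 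Matching the coefficients in the reduced expressions gives $X_1 \mapsto a^6 X_1$, $Y_0 \mapsto a^{-12} Y_0$, $Y_1 \mapsto a^{-3} Y_1$, $A \mapsto a^{-15} A$, and $B \mapsto a^{-6} B$.

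The crucial step is to show that $X_1, Y_0, Y_1, A$ cannot vanish simultaneously on $Y^1$. For this I would substitute into the higher-order Bianchi identity \ref{ddt2} all the vanishing data from Proposition \ref{rq2} ($T^2 = U_1 = S^1 = S^2 = 0$, $Q^2 = 1$) together with the hypothesis $X_1 = Y_0 = Y_1 = A = 0$, which additionally forces $\phi^1 = B\omega^1$, $5\phi^2 = -B\omega^2$, $\psi = 0$. The first line of \ref{ddt2} then collapses to $\omega \wedge (5\omega^1 + \tfrac{B^2}{5}\omega^2)$, the second line vanishes, and the third becomes $-\omega^2 \wedge (dB + 6B\phi + 6Bw)$. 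Since $\omega^2 \wedge (\cdot)$ cannot contribute to $\omega \wedge \omega^1$, the $\omega\wedge\omega^1$-coefficient of the identity equals the clean $5$ coming from the $5(Q^2)^2\omega^1$-term, forcing $5\,\omega\wedge\omega^1 = 0$ and contradicting the linear independence of $\omega$ and $\omega^1$.

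Knowing that at least one of $X_1, Y_0, Y_1, A$ is nonzero, the reductions now follow directly from the weight table. Any invariant $f$ of nonzero weight $k$ satisfies $\tilde f = a^k f$; imposing $|f|=1$ then forces $|a|^k = |f|^{-1}$, determining $|a|$ uniquely. If $k$ is even (the cases $X_1$, $Y_0$, $B$) the sign of $a$ remains free and the normalization cuts $Y^1$ down to a $\Z/2\Z$-subbundle. If $k$ is odd (the cases $Y_1$, $A$) the equation $a^k f = 1$ has a unique real solution for $a$, which pins down a section of $Y^1 \to M$, i.e.\ a parallelism. Combining this with the non-vanishing step yields a canonical $\Z/2\Z$-reduction in every case, and the further reduction to a parallelism whenever $Y_1 \neq 0$ or $A \neq 0$.

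The main obstacle is the substitution step into \ref{ddt2}: one must correctly carry along all the vanishing data from Proposition \ref{rq2} ($C = D = E = S^1 = S^2 = T^1 = T^2 = U_2 = X_2 = Y_2 = 0$), simultaneously replace $\phi^1$, $\phi^2$, $\psi$ by their reduced expressions, and verify that the $B$-dependent remainders coming from $(6S^2-B)\phi^2$ and $-\omega^2\wedge(dB + 6B\phi + 6Bw)$ only produce $\omega\wedge\omega^2$, $\omega^1\wedge\omega^2$, or vertical contributions, never $\omega\wedge\omega^1$, so that the clean coefficient $5$ survives and delivers the contradiction.
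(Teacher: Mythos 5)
Your proposal is correct and follows essentially the same route as the paper: the same weight computation for $X_1, Y_0, Y_1, A, B$ under the residual $\R^*$-action on $Y^1$, the same contradiction $5\,\omega\wedge\omega^1=0$ extracted from the Bianchi identity \ref{ddt2} under the hypothesis $X_1=Y_0=Y_1=A=0$, and the same parity-of-weight argument distinguishing the $\Z/2\Z$-reduction from the reduction to a parallelism. The only difference is that you spell out the substitution into \ref{ddt2} in more detail than the paper does, which is a useful verification but not a different method.
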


A more refined description of the reduction is obtained by observing that the functions  $X_1$ and $ Y_1$ cannot both vanish.

\begin{prop}\label{pg}
On the fiber bundle $Y^1$ the functions $X_1$ and $ Y_1$ can not be simultaneously zero.
\end{prop}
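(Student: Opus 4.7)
The strategy is proof by contradiction. Suppose $X_1\equiv 0$ and $Y_1\equiv 0$ on $Y^1$, so in particular $dX_1=0$ and $dY_1=0$. Then the expressions of Proposition \ref{rq2} specialize to $\phi^2 = -\tfrac{B}{5}\omega^2$ (from \ref{tildephi2}) and $\psi = \tfrac{Y_0}{4}\omega - \tfrac{A}{4}\omega^2$ (from \ref{tildepsi}); recall also that $Q^2=1$, $S^2=T^2=U_1=0$, and $3w=-2\phi$ hold throughout $Y^1$.

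The next step is to substitute these data into the higher-order Bianchi identity \ref{ddt2}. Using $3w=-2\phi$ (so that $6B\phi+6Bw=2B\phi$), and canceling all the terms with coefficients $X_1$, $Y_1$, $T^2$, $U_1$, $S^2$, the identity collapses to
\[
\omega\wedge\Bigl(\tfrac{B^2}{5}\omega^2 + 5\omega^1 - Y_0\omega^2\Bigr) \;-\; \omega^2\wedge\bigl(dB + 2B\phi - \psi\bigr) = 0,
\]
as a 2-form identity on $Y^1$.

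The contradiction is then extracted by isolating the coefficient of the basis 2-form $\omega\wedge\omega^1$ in the coframe $(\omega,\omega^1,\omega^2,\phi)$ of $Y^1$. Expanding $dB$ in this coframe, every summand of $-\omega^2\wedge(\,\cdot\,)$ produces only 2-forms containing $\omega^2$, namely of type $\omega\wedge\omega^2$, $\omega^1\wedge\omega^2$, or $\phi\wedge\omega^2$; and the formula for $\psi$ (which has no $\omega^1$ component) contributes only to $\omega\wedge\omega^2$. Thus the unique contribution to $\omega\wedge\omega^1$ is $\omega\wedge 5\omega^1$, giving coefficient $5$, and linear independence of the basis 2-forms forces $5=0$, a contradiction.

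This argument is a direct refinement of the final computation in the proof of Proposition \ref{proposition:reduction}, where the same calculation was carried out under the stronger hypothesis $X_1=Y_0=Y_1=A=0$. The decisive term $5(Q^2)^2\omega^1$ in identity \ref{dY1} persists with nonzero coefficient regardless of the values of $Y_0$ and $A$, because $Q^2=1$ on $Y^1$; hence the contradiction still goes through under the weaker assumption $X_1=Y_1=0$. There is no serious obstacle, only the bookkeeping task of verifying that every term in \ref{ddt2} not displayed above vanishes under the hypotheses.
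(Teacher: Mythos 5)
Your proof is correct and follows essentially the same route as the paper: both substitute the data of Proposition \ref{rq2} into the Bianchi identity \ref{ddt2}, set $X_1=Y_1=0$, and observe that the surviving term $5(Q^2)^2\,\omega\wedge\omega^1=5\,\omega\wedge\omega^1$ cannot be cancelled --- the paper phrases this as reducing modulo $\omega^2$, you as extracting the $\omega\wedge\omega^1$-coefficient in the coframe, which is the same computation. The only differences are bookkeeping.
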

\Pf On $Y^1$, taking account of theorem \ref{rq2}, we get from \ref{ddt2}
$$
\begin{array}{l}
\omega\wedge\left(dY_1+Y_1\phi+(5-X_1B)\omega^1+(\frac{3}{2}A-Y_0)\omega^2\right)+
\omega^1\wedge\left(dX_1-2X_1\phi-2Y_1\omega^2 \right)\\
-\omega^2\wedge\left(dB+2B\phi -\frac{1}{4}Y_0\omega-\frac{1}{4}Y_1\omega^1\right)=0.
\end{array}
$$
If $X_1=Y_1=0$, we obtain modulo $\omega^2$
$$
 5\omega\wedge \omega^1=0
$$
which is a contradiction.
\EPf

We can then use the function $a$ to fix $X_1=\pm1$ or $Y_1=1$.
Consider first the case $Y_1\neq 0$. We can use a section of $H^1$ to impose $\tilde Y_1=1$.
\begin{prop}\label{rY}
If $Y_1\neq 0$ we can choose $a=\left(Y_1\right)^{\frac 1 3}$,  and  with this choice we reduce $Y^1$ to  an $\{e\}$-bundle $Y^2$ where  $\tilde Y_1=1$, $\tilde Q^2=1$, $\tilde U_1=\tilde T^2=\tilde S^2=0$.  Moreover on $Y^2$ we have $\tilde Q^1=\tilde S^1=\tilde U_2=\tilde T^1=0$,
$3 \tilde w=-2\tilde \phi$, $\tilde \phi^1= \tilde A\tilde \omega+\tilde B\tilde \omega^1$, $5\tilde \phi^2=\tilde \omega+\tilde X_1\tilde \omega^1-\tilde B\tilde \omega^2$, $4\tilde \psi=\tilde Y_0\tilde \omega+\tilde \omega^1 - \tilde A\tilde \omega^2$, $\tilde \phi=(
\tilde A\tilde X_1+\frac{1}{5}\tilde B+\tilde Y_{10})\tilde \omega+(\frac{6}{5}\tilde X_1\tilde  B+X_{10}-5)\tilde \omega^1+(\tilde Y_0+\tilde Y_{12}-\frac{\tilde B^2}{5})\tilde \omega^2$.
\end{prop}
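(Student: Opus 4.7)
The strategy is a straightforward continuation of Proposition \ref{proposition:reduction}: use the residual $\R^*$-action on $Y^1$ to normalize $Y_1$ to $1$, then exploit the resulting identity $dY_1=0$ via the Bianchi identity \ref{dY1} to solve for $\phi$. Since the transformation rule derived in the previous subsection gives $\tilde Y_1 = a^{-3} Y_1$, the hypothesis $Y_1\neq 0$ allows the unique real choice $a=Y_1^{1/3}$. This fixes the $\R^*$-fiber to a single point, yielding the $\{e\}$-bundle $Y^2\subset Y^1\subset Y$.

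The assertions $\tilde Q^1 = \tilde S^1 = \tilde U_2 = \tilde T^1 = 0$, $\tilde Q^2 = 1$, $\tilde U_1 = \tilde T^2 = \tilde S^2 = 0$ and $3\tilde w = -2\tilde\phi$ are inherited directly from Proposition \ref{rq2}, since these are exactly the defining conditions of $Y^1$ which $Y^2$ is a subset of. The formulas $\tilde \phi^1 = \tilde A\tilde\omega + \tilde B\tilde\omega^1$, $5\tilde\phi^2 = \tilde\omega + \tilde X_1\tilde\omega^1 - \tilde B\tilde\omega^2$ and $4\tilde\psi = \tilde Y_0\tilde\omega + \tilde\omega^1 - \tilde A\tilde\omega^2$ come from substituting $\tilde Y_1 = 1$ into equations \ref{tildephi1}, \ref{tildephi2} and \ref{tildepsi} respectively.

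The substantive computation is the formula for $\tilde\phi$. On $Y^2$ the function $Y_1$ is constantly equal to $1$, so $d\tilde Y_1 = 0$. Plugging the values $\tilde Q^2 = 1$, $\tilde S^2 = 0$, $\tilde T^2 = 0$ into equation \ref{dY1}, together with the relation $9\tilde w = -6\tilde\phi$, the left-hand side collapses:
\begin{equation*}
7\tilde\phi + 9\tilde w - \tilde X_1 \tilde\phi^1 - \tilde B\tilde\phi^2 + 5\tilde\omega^1 - \tilde Y_0\tilde\omega^2 = \tilde Y_{10}\tilde\omega + \tilde X_{10}\tilde\omega^1 + \tilde Y_{12}\tilde\omega^2,
\end{equation*}
which reduces to $\tilde\phi = \tilde X_1\tilde\phi^1 + \tilde B\tilde\phi^2 - 5\tilde\omega^1 + \tilde Y_0\tilde\omega^2 + \tilde Y_{10}\tilde\omega + \tilde X_{10}\tilde\omega^1 + \tilde Y_{12}\tilde\omega^2$. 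Inserting the expressions for $\tilde\phi^1$ and $\tilde\phi^2$ above and collecting coefficients of $\tilde\omega$, $\tilde\omega^1$, $\tilde\omega^2$ (noting $\tilde X_1\tilde B + \tfrac{\tilde B \tilde X_1}{5} = \tfrac{6}{5}\tilde X_1 \tilde B$) yields the claimed formula.

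The only potential pitfall is bookkeeping: one must carefully track which of the Bianchi-derived functions vanish on $Y^1$ (per Proposition \ref{rq2}) versus which survive and transform. Beyond this, there is no real obstacle — the proof is essentially automatic once one recognizes that the identity $dY_1 = 0$ on the $\{e\}$-bundle is what pins down the last remaining connection component $\phi$.
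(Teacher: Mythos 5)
Your proof is correct and follows the same route as the paper, which simply states that the choice of $a$ gives $\tilde Y_1=1$ and that the remaining properties follow from Proposition \ref{rq2} and equation \ref{dY1}; you have filled in the details, and your derivation of the formula for $\tilde\phi$ from equation \ref{dY1} (using $d\tilde Y_1=0$, $9\tilde w=-6\tilde\phi$, and the expressions for $\tilde\phi^1$, $\tilde\phi^2$) checks out exactly.
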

\Pf The choice of $a$ implies that $\tilde Y_1=1$. The other properties follow from theorem \ref{rq2} and from \ref{dY1}.\EPf

If $Y_1=0$, we can use $X_1\neq 0$ to reduce $Y^1$ to a $\Z/2\Z$-bundle. 

\begin{prop}\label{rX}	  
If $X_1\neq 0$, $Y_1=0$, we can choose 
$a= \pm|X_1|^{-\frac 1 6}$, and  with this choice we reduce $Y^1$ to  a $\Z/2\Z$-bundle $Y^2$ where $\tilde X_1=\epsilon$, with $\epsilon=\pm 1$, $\tilde Q^2=1$, $\tilde U_1=\tilde T^2=\tilde S^2=\tilde Y_1=0$.  Moreover on $Y^2$ we have $\tilde Q^1=\tilde S^1=\tilde U_2=\tilde T^1=0$,
$3 \tilde w=-2\tilde \phi$, $\tilde \phi^1= \tilde A\tilde \omega+\tilde B\tilde \omega^1$, $5\tilde \phi^2=\epsilon\tilde \omega^1-B\tilde \omega^2$, $4\tilde \psi=\tilde Y_0\tilde \omega - \tilde A\tilde \omega^2$, $-2\epsilon\tilde\phi=\tilde X_{10}\tilde \omega+\tilde X_{11}\tilde\omega^1+\tilde X_{12}\tilde \omega^2$
\end{prop}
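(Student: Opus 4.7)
The plan is to imitate the reduction of Proposition \ref{rY}, but use the function $X_1$ instead of $Y_1$ to kill the fiber parameter. First I would note that by Proposition \ref{pg} the hypothesis $Y_1=0$ forces $X_1\neq 0$ automatically, so the normalization $\tilde X_1=\epsilon\in\{\pm 1\}$ is well-posed. From the transformation law on $Y^1$ computed in the preceding subsection, namely $\tilde X_1=a^6 X_1$, and since $a^6>0$ for real $a\neq 0$, the unique way to achieve $\tilde X_1=\epsilon=\mathrm{sign}(X_1)$ is to take $a=\pm|X_1|^{-1/6}$. The residual $\Z/2\Z$ ambiguity $a\mapsto -a$ cuts $Y^1$ down to a $\Z/2\Z$-bundle $Y^2$, as announced.

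Next I would collect the expressions for the remaining connection forms on $Y^2$. All the vanishing properties on $Y^1$ established in Proposition \ref{rq2} (namely $\tilde Q^1=\tilde S^1=\tilde U_2=\tilde T^1=0$, $\tilde Q^2=1$, $\tilde U_1=\tilde S^2=\tilde T^2=0$, $3\tilde w+2\tilde\phi=0$) restrict to $Y^2$. The identities $\tilde\phi^1=\tilde A\tilde\omega+\tilde B\tilde\omega^1$, $5\tilde\phi^2=\tilde Y_1\tilde\omega+\tilde X_1\tilde\omega^1-\tilde B\tilde\omega^2$ and $4\tilde\psi=\tilde Y_0\tilde\omega+\tilde Y_1\tilde\omega^1-\tilde A\tilde\omega^2$ from Proposition \ref{rq2} specialize, under $\tilde Y_1=0$ and $\tilde X_1=\epsilon$, to the claimed
$$
5\tilde\phi^2=\epsilon\tilde\omega^1-\tilde B\tilde\omega^2,\qquad 4\tilde\psi=\tilde Y_0\tilde\omega-\tilde A\tilde\omega^2.
$$

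Finally, the new formula for $\tilde\phi$ is obtained by differentiating the normalization. Since $\tilde X_1=\epsilon$ is constant on $Y^2$, we have $d\tilde X_1=0$. Applying the Bianchi identity \ref{dX1},
$$
dX_1+6X_1\phi+12X_1 w+12T^2\phi^2=X_{10}\omega+X_{11}\omega^1+X_{12}\omega^2,
$$
together with $\tilde T^2=0$ and $\tilde X_1=\epsilon$, gives
$$
6\epsilon\tilde\phi+12\epsilon\tilde w=\tilde X_{10}\tilde\omega+\tilde X_{11}\tilde\omega^1+\tilde X_{12}\tilde\omega^2.
$$
Substituting $3\tilde w=-2\tilde\phi$, hence $12\tilde w=-8\tilde\phi$, the left hand side collapses to $-2\epsilon\tilde\phi$, yielding the claimed expression. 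The main obstacle here is not conceptual but bookkeeping: one must verify that all the $Y^1$-level identities do survive the further reduction and that no transformation terms in $a,b,c,e,f$ creep in when passing to $Y^2$. This is automatic because on $Y^1$ the structure group is already one-dimensional (parametrized by $a$ alone with $b=a^{-5}$, $c=e=f=0$ fixed in the previous step), so the Bianchi identities pull back verbatim.
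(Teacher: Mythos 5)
Your proposal is correct and follows the same route as the paper: normalize $\tilde X_1=a^6X_1$ to $\epsilon=\mathrm{sign}(X_1)$ (leaving the $\Z/2\Z$ ambiguity $a\mapsto -a$), carry over the identities from Proposition \ref{rq2}, and extract the formula for $\tilde\phi$ from the Bianchi identity \ref{dX1} using $3\tilde w=-2\tilde\phi$ and $\tilde T^2=0$. The paper's proof is a one-line version of exactly this argument; your computation $6\epsilon\tilde\phi+12\epsilon\tilde w=-2\epsilon\tilde\phi$ is the detail it leaves implicit.
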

\Pf The choice of $a$ implies  that $\tilde X_1=\epsilon$. The other properties follows from theorem \ref{rq2} and from \ref{dX1}.\EPf


\section{Homogeneous structures}\label{section:homogeneous}

\subsection{Three dimensional Lie groups with path structures}\label{subsection:3dim}

By Tresse's result, all homogeneous path structures which are not flat have a three dimensional group of automorphisms.  Therefore they are all 
locally isomorphic to a
left invariant structure on a three dimensional Lie group.
More precisely, we have the following direct Corollary
(Corollary \ref{corollaire-intro} of the introduction
that we state here again for the convenience of the reader).
A \emph{local Killing field} of a path structure
at a point $p$
is a vector field defined on a neighbourhood of $p$,
whose flow preserves the path structure.
\begin{cor}
At a point where the curvature is non-zero and
the algebra of local Killing fields
has dimension at least three,
a path structure is locally isomorphic to a
left-invariant path structure on a three-dimensional
Lie group,
and the algebra of local Killing fields
has dimension exactly three.
\end{cor}
\Pf
Let $p\in M^3$ be a point where the curvature of the path structure
$\mathcal{L}$
does not vanish, and where the Lie algebra
$\mathfrak{g}=\mathfrak{kill}^{loc}_{\mathcal{L}}(p)$
of (germs of) local Killing fields of $\mathcal{L}$ at $p$
has dimension at least three.
According to Theorem \ref{thm-reduction},
the Cartan bundle $Y$ admits a canonical reduction
to a $\mathbb{Z}/2\mathbb{Z}$-sub-bundle $Y'$.
In particular for any $V\in\mathfrak{g}$,
the canonical lift $\hat{\varphi}_V^t$ of
the flow of $V$ to $Y$
which preserves the Cartan connexion,
does preserve the reduction $Y'$.
Hence if $V(p)=0$,
then with $\hat{p}\in Y'$ any point of the fiber of $p$ in $Y'$,
the continuous map
$t\mapsto \hat{\varphi}_V^t(\hat{p})$ has values
in the discrete fiber of $p$ in $Y'$
and is therefore constant.
Therefore $\hat{\varphi}_V^t(\hat{p})=\hat{p}$
for any small enough times $t$, and
since $\hat{\varphi}_V^t$ preserves
the Cartan connexion which is a parallelism on $Y$,
this
forces $\hat{\varphi}_V^t$ to equal the identity on a
neighbourhood of $\hat{p}$ in $Y$.
The flow of $V$ is thus trivial on a neighbourhood of $p$,
\emph{i.e.} $V=0$ in $\mathfrak{g}$.
In the end, the evaluation map
$V\in\mathfrak{g}\mapsto V(p)$
is injective.
In particular, $\mathfrak{g}$ has dimension exactly three.
With $G$ the simply connected Lie group of Lie algebra
$\mathfrak{g}$
and $U_0$ an open neighbourhood of $0$ in $\mathfrak{g}$
on restriction to which the exponential map $\exp$
is a diffeomorphism onto its image $U'$,
this shows moreover that
the differential of
the map $Ev_p\colon\exp(V)\in U\mapsto\varphi_V^1(p)\in M^3$
at the identity is injective.
Since $\dim\mathfrak{g}\geq3=\dim M$,
$Ev_p$ is therefore by the inverse mapping theorem a
diffeomorphism onto an open neighbourhood $V$ of $p$,
possibly restricting $U_0$.
With $\mathcal{L}_{G}$
the unique path structure of $U$ such that
$Ev_p$ is a path structure isomorphism from
$(U,\mathcal{L}_{G})$
to $(V,\mathcal{L})$,
$\mathcal{L}_{G}$ is by construction invariant
by the left-invariant vector fields of $G$,
which allows to extend $\mathcal{L}_{G}$ to a left-invariant
path structure on $G$ (denoted in the same way).
This shows that the restriction of $\mathcal{L}$ to $V$
is isomorphic to the restriction of a left-invariant
path structure on a three-dimensional Lie group,
which concludes the proof.
\EPf

In this section we classify three dimensional Lie groups with homogeneous path structures. The results are summarized in Tables in section
\ref{subsection:table}.

We follow an analogous scheme to classify homogeneous CR structures (see \cite{FG}). In order to classify the simply connected groups having  left invariant path structure it is sufficient to classify three-dimensional Lie algebras $\mathfrak{g}$ admitting a vector subspace $\mathfrak{p}\subset \mathfrak{g}$ such that $\mathfrak{g}=[\mathfrak{p},\mathfrak{p}]\oplus\mathfrak{p}$ with a decomposition $\mathfrak{p}=\mathfrak{e_1}\oplus\mathfrak{e_2}$.

Given a basis $\{X_1,X_2\}$ of $\mathfrak{p}$, with $X_i\in \mathfrak{e_i}$, define $Y=-[X_1,X_2]$ and consider the map $\ad_Y:\mathfrak{p}\to \mathfrak{g}$ whose matrix in the given basis is
$$
A= \left ( \begin{array}{cc}

                        a_{11}      	&    a_{12}             	\\

                        a_{21}  	&      a_{22} 	      \\
                        
                       a_{31}  	&      a_{32} 	

                \end{array} \right ).
$$
Jacobi identities are computed as the following equations:
$$
 a_{11}      	+    a_{22}  =0
$$
$$
a_{31}a_{12}-a_{32}a_{11}=0
$$
$$
a_{31}a_{22}-a_{32}a_{21}=0.
$$
Note that in terms of a dual basis $\omega, \omega^1,\omega^2$ we obtain the equations
\begin{equation}\label{equations:homogeneous}
\begin{array}{lcl}
d\omega^1&=&a_{11}\omega^1\wedge\omega+a_{12}\omega^2\wedge\omega\\
d\omega^2&=& a_{21}\omega^1\wedge\omega+a_{22}\omega^2\wedge \omega\\
d\omega &=&a_{31}\omega^1\wedge\omega+a_{32}\omega^2\wedge \omega+\omega^1\wedge\omega^2
\end{array}
\end{equation}

A different basis $\{\bar{X_1},\bar{X_2}, \bar{Y}=-[\bar{X_1},\bar{X_2}]\}$ for $\mathfrak{g}$, which defines the same path structure, is given by the change of basis matrix
$$
P= \left ( \begin{array}{ccc}

                        \lambda_1     	 &    0   	&          0	\\

                       0				&      \lambda_2 	& 0      \\
                        
                       0 			&      0 		& \lambda_1\lambda_2

                \end{array} \right ),
$$with  $\lambda_1,\lambda_2\in \R^*$. 
The matrix of $\ad_{\bar{Y}}$ is now $\bar{A}=\lambda_1\lambda_2P^{-1}AN$, where 
$$
N=\left ( \begin{array}{cc}

                        \lambda_1     	 &    0   		\\

                       0				&      \lambda_2

                \end{array} \right )
$$
and therefore
 $$
 \bar{A}=\left ( \begin{array}{cc}

                        \lambda_1\lambda_2 a_{11}      	&     { \lambda_2}^2 a_{12}             	\\

                          {\lambda_1}^2a_{21}  	&       \lambda_1\lambda_2 a_{22} 	      \\
                        
                      \lambda_1 a_{31} 	&    \lambda_2  a_{32}	

                \end{array} \right ).
 $$

The goal now is to use the change of basis in order to find normal forms for $ad_Y$.  
Observe that the vector space isomorphism
$$
X_1\to X_2, \ \  X_2\to X_1,\ \  Y\to -Y
$$
changes the path structures. But we might not distinguish them as they correspond to a reordering of the decomposition. In other words,without loss of generality, we allow ourselves to change the order of the decomposition $\mathfrak{p}=\mathfrak{e_1}\oplus\mathfrak{e_2}$.
In this case, the matrix $A$ changes to
$$
 \bar{A}=\left ( \begin{array}{cc}

                       - a_{22}      	&      -a_{21}             	\\

                        - a_{12}  	&      -a_{11} 	      \\
                        
                       a_{32} 	&     a_{31}

                \end{array} \right ).
 $$

There are two cases to consider:

\subsubsection{$\ad_Y\mathfrak{p}\subset \mathfrak{p}$}
In this case  $a_{31}=  a_{32} =0$.	The general matrix for $\ad_Y$ is 
$$
\left ( \begin{array}{cc}

                        a     	&    b         	\\

                        c	&      -a	      \\
                        
                       0 	&      0	

                \end{array} \right ).
$$
\begin{itemize}
\item
If $a=b=c=0$ we obtain the usual invariant path structure on the Heisenberg group.

\item
If $b=c=0$ and $a\neq 0$ we may normalize the basis to obtain the normal form
$$
 \left ( \begin{array}{cc}

                        1     	&    0         	\\

                        0	&      -1	      \\
                        
                       0 	&      0	

                \end{array} \right ),
$$
which gives the usual  invariant path structure on $\SL(2,\R)$.

\item
If $b\neq 0$ (which we can assume also if  $c\neq 0$ by a change of order) we normalize the matrix of $\ad_Y$ to
$$
 \left ( \begin{array}{cc}

                        a     	&    \pm 1         	\\

                        c	&      -a	      \\
                        
                       0 	&      0	

                \end{array} \right )
$$
where $a,c\in \R$.  One can further normalize:
\begin{enumerate}
\item
If $a\neq 0$ then we normalize to
$$
 \left ( \begin{array}{cc}

                        1     	&    \pm 1         	\\

                        c	&      -1	      \\
                        
                       0 	&      0	

                \end{array} \right ).
$$
Here there are two cases.  If $c\neq \mp 1$ then the group is simple.  Indeed  if, $\pm c+1>0$ then it corresponds to $\SL(2,\R)$ and if  $\pm c+1<0$ it corresponds to $SU(2)$.  This can be checked computing the Killing form of the Lie algebra and showing that it is non-degenerate and, moreover, negative exactly when  $\pm c+1<0$.  Observe also that the case 
$$
 \left ( \begin{array}{cc}

                        1     	&    -1         	\\

                        c	&      -1	      \\
                        
                       0 	&      0	

                \end{array} \right ),
$$
with $c<0$ is equivalent, using a reordering of the vectors $X_1$ and $X_2$, to the case
$$
\left ( \begin{array}{cc}

                        1     	&    1         	\\

                        -c	&      -1	      \\
                        
                       0 	&      0	

                \end{array} \right ).
$$

 If $c=\mp 1$ then it is solvable.  We obtain the Euclidean and Poincar\'e groups.
 Indeed, for $c=-1$ we get (making $e_1=X_1-X_2, \ e_2=Y, \ e_3=X_2$)
 the usual relations $[e_1,e_2]=0,\ [e_1,e_3]=-e_2, \ [e_2,e_3]=e_1$ of the Euclidean group.
 For $c=1$ we get (making $e_1=X_1+X_2, \ e_2=Y, \ e_3=X_2$) the usual relations $[e_1,e_2]=0,\ [e_1,e_3]=e_2, \ [e_2,e_3]=e_1$ of the Poincar\'e group.
\item
If $a= 0$ and $c\neq 0$, we normalize to
$$
 \left ( \begin{array}{cc}

                        0     	&    -1       	\\

                       1	&      0	      \\
                        
                       0 	&      0	

                \end{array} \right )
$$
which corresponds to the Lie algebra of $SU(2)$,
or
$$
 \left ( \begin{array}{cc}

                        0     	&    1       	\\

                       1	&      0	      \\
                        
                       0 	&      0	

                \end{array} \right ),
$$
which corresponds to the Lie algebra of $\SL(2,\R)$ (make $e_1=X_1-X_2, \ e_2=X_1+X_2, \ e_3=Y$ to obtain the usual relations $[e_1,e_2]=2e_3,\ [e_3,e_1]=-e_1, \ [e_3,e_2]=e_2$).
\item
If $a=c=0$ we may write
$$
\left ( \begin{array}{cc}

                        0     	&    \pm 1         	\\

                        0	&      0	      \\
                        
                       0 	&      0	

                \end{array} \right ).
$$
This corresponds to two solvable groups. One of them (the case the upper right coefficient is $+1$)  is the euclidian group (Bianchi VII) which we can recognise making $e_1=Y, e_2=X_1, e_3=X_2$ so the algebra becomes 
$[e_1,e_2]=0,\ [e_3,e_1]=e_2, \ [e_3,e_2]=-e_1$.  The other case corresponds to the Poincar\'e group (Bianchi VI).
\end{enumerate}
\end{itemize}

\subsubsection{$\ad_Y\mathfrak{p}\nsubseteq\mathfrak{p}$}

Then $a_{31}$ and $a_{32}$ do not vanish at the same time. One can suppose that $a_{31}\neq0$ by exchanging the vectors $X_1$ and $X_2$.  Observe now that from
 $$
a_{31}a_{12}-a_{32}a_{11}=0
$$
$$
a_{31}a_{22}-a_{32}a_{21}=0.
$$
and $a_{31}\neq0$   we have $a_{11}a_{22}-a_{12}a_{21}=0$.
We normalize the matrix of $\ad_Y$ as
$$
 \left ( \begin{array}{cc}

                        a     	&    b       	\\

                        c	&      -a	      \\
                        
                       1	&      f	

                \end{array} \right ),
$$
where $a^2+bc=0$, $af-b=0$ and $cf+a=0$.  We may write then
$$
 \left ( \begin{array}{cc}

                        -cf     	&    -cf^2       	\\

                        c	&      cf	      \\
                        
                       1	&      f	

                \end{array} \right ).
$$
 We normalize further:
\begin{enumerate}
\item
If $f\neq 0$ and $c=0$ we may normalize to
$$
 \left ( \begin{array}{cc}

                        0    	&    0      	\\

                      0 	&      0	      \\
                        
                       1	&      1	

                \end{array} \right )
$$
This corresponds to the solvable group obtained by the direct product of the affine group with $\R$: An invariant path structure on the decomposable solvable group (the only group with derived algebra of dimension one which is not central).
\item
If $f=0$  and $c\neq 0$, we may normalize to
$$
 \left ( \begin{array}{cc}

                        0     	&    0      	\\

                       c 	&      0	      \\
                        
                       1	&      0	

                \end{array} \right )
$$
which corresponds to a family of solvable groups (make $e_1=Y,e_2=cX_2+Y,e_3=X_1$,  then we obtain  $[e_1,e_2]=0,\ [e_1,e_3]=e_2, \ [e_2,e_3]=cY+cX_2+Y=e_2+ce_1$.  The two 
families of solvable groups appear: Bianchi VI (for $c>-1/4$ and $c\neq 0$) and VII  (for $c<-1/4$).  Moreover if $c=-1/4$ then we get Bianchi IV.
\item If $f=0$  and $c= 0$ we obtain another invariant path structure on the decomposable solvable group.  The matrix $ad_Y$ is normalized to
$$
 \left ( \begin{array}{cc}

                        0     	&   0     	\\

                       0 	&      0	      \\
                        
                       1	&      0	

                \end{array} \right ).
$$
\item If $f\neq 0$ and $c\neq 0$ and we may normalize to
$$
 \left ( \begin{array}{cc}

                         -c     	&    -c      	\\

                        c	&      c	      \\
                        
                       1	&      1

                \end{array} \right ),
$$
and a computation shows that for $c>-1/4, c\neq 0$ the group is of type Bianchi VI and for $c<-1/4$ the group is of type  VII.  In the case $c=-1/4$ we obtain type IV.

\end{enumerate}

\subsection{Path structure curvatures of the homogeneous examples}\label{section:curvatures}

The path geometry of the homogenous examples may be described using strict path structure invariants.  Indeed, we showed that there exists parallelisms for each homogeneous  three dimensional path structure. In particular there exists strict path structures associated to each of those homogeneous three dimensional path structures.  In many cases the parallelism and therefore the strict structures are canonical.

Fix a parallelism $X_1,X_2,Y=-[X_1,X_2]$ of the strict path structure,
defining an embedding $j:M\to Y^1$ into the strict
path structure bundle.
Recall (see the beginning of section \ref{subsection:3dim}) that
the matrix of $\ad_Y$ in this basis is:
$$
A= \left ( \begin{array}{cc}

                        a      	&    b          	\\

                        c 	&      -a	      \\
                        
                       e 	&      f	

                \end{array} \right )
$$
and satisfies
$
af-be=0
$
and
$
cf+ae=0.
$

The goal of this section is to compute the path structure curvatures for each homogeneous structure. 

\begin{prop} \label{proposition:curvatures}
The  curvatures of the invariant
path structure defined by $(\R X_1,\R X_2)$
are given by
$$
j^*\iota ^*Q^1=
-a(\frac{3}{2}b-\frac{1}{3}f^2)
$$
and
$$
j^*\iota ^*Q^2=
-a(\frac{3}{2}c+\frac{1}{3}e^2).
$$

\end{prop}
The embedding $\iota:Y^1\to Y$ of the strict path structures
bundle into the path structure bundle was introduced
in Proposition \ref{proposition:embedding}.

\Pf

We obtain a basis of the Lie algebra corresponding to an invariant structure with dual left invariant forms $\theta, \theta^1,\theta^2$ satisfying equations
\ref{equations:homogeneous}
\begin{equation}\begin{array}{lcl}
d\theta^1&=&a\theta^1\wedge\theta+b\theta^2\wedge\theta\\
d\theta^2&=& c\theta^1\wedge\theta-a\theta^2\wedge \theta\\
d\theta &=&e\theta^1\wedge\theta+f\theta^2\wedge \theta+\theta^1\wedge\theta^2
\end{array}
\end{equation}

The goal now is to identify the strict path geometry invariants of the structures.  For that sake we need to obtain the structure equations \ref{equations:strict}.  Note that $\theta$ is the fixed contact form.

First, in order to obtain equation $d\theta=\theta^1\wedge \theta^2$ write  $\theta^1$ and $\theta^2$ in terms of a new basis $\theta, \theta^1-f\theta$ and $\theta^2+e\theta$.  We obtain then, using the same symbols  $\theta, \theta^1,\theta^2$ for the new basis, the equations
\begin{equation}\begin{array}{lcl}
d\theta^1&=&a\theta^1\wedge\theta+b\theta^2\wedge\theta-f\theta^1\wedge\theta^2\\
d\theta^2&=& c\theta^1\wedge\theta-a\theta^2\wedge \theta+e\theta^1\wedge\theta^2\\
d\theta &= &\theta^1\wedge\theta^2
\end{array}
\end{equation}

Now we proceed as in section \ref{section:pseudo} (equations \ref{equations:jembedding}) and write the connection and torsion forms imposing the following equations
\begin{equation}
\begin{array}{lcl}
d\theta^1 -3\upsilon\wedge \theta^1 & =&\theta\wedge \tau^1,\\
d\theta^2 +3\upsilon\wedge \theta^2&=&\theta\wedge \tau^2,\\
d\theta&=&\theta^1\wedge \theta^2.
\end{array}
\end{equation}
where we write the pull-back of the forms  $\upsilon, \tau^1$ and $\tau^2$ by the embedding $j$ using the same letters. 
We obtain
\begin{equation}\begin{array}{lcl}
d\theta^1&=&\theta^1\wedge (a\theta-f\theta^2) -b\theta\wedge\theta^2\\
d\theta^2&=& -\theta^2\wedge( a\theta+e\theta^1) -c\theta\wedge\theta^1\\
d\theta &= &\theta^1\wedge\theta^2
\end{array}
\end{equation}

Comparing with the structure equations we obtain 
\begin{equation}\begin{array}{lcl}
\upsilon &= &-\frac{1}{3}(a\theta+e\theta^1-f\theta^2) \\
\tau^1 & =&  -b\theta^2\\
\tau^2 & = & - c\theta^1
\end{array}
\end{equation}

We compute equation \ref{dtheta11},  the exterior derivative of $\upsilon$,
$
d \upsilon=R\theta^1\wedge \theta^2+W^1\theta^1\wedge\theta+W^2\theta^2\wedge\theta
$ 
and obtain

\begin{equation}
\begin{array}{lcl}
R &= & -\frac{1}{3}(a-2ef) \\
W^1 & =&   -\frac{1}{3}(ae-cf)=\frac{2}{3}cf\\
W^2 & = & -\frac{1}{3}(be+af)= -\frac{2}{3}be.
\end{array}
\end{equation}

From
$$
d\tau^1+3\tau^1\wedge \upsilon=3W^2\theta^1\wedge \theta^2+S^1_1\theta\wedge\theta^1+S^1_2\theta\wedge\theta^2
$$
we obtain
$$
-bd\theta^2+3b\theta^2\wedge \frac{1}{3}(a\theta+e\theta^1)=3W^2\theta^1\wedge \theta^2+S^1_1\theta\wedge\theta^1+S^1_2\theta\wedge\theta^2.
$$
That is
$$
-b( -\theta^2\wedge( a\theta+e\theta^1) -c\theta\wedge\theta^1)+3b\theta^2\wedge \frac{1}{3}(a\theta+e\theta^1)=3W^2\theta^1\wedge \theta^2+S^1_1\theta\wedge\theta^1+S^1_2\theta\wedge\theta^2
$$
which implies
$$
 W^2= -\frac{2}{3}eb,\ \  S^1_1=bc, \ \ \ S^1_2=-2ab.
$$
Analogously, from
$
d\tau^2-3\tau^2\wedge \upsilon=3W^1\theta^1\wedge \theta^2+S^2_1\theta\wedge\theta^1+S^2_2\theta\wedge\theta^2,
$
we obtain 
$$
-c(\theta^1\wedge (a\theta-f\theta^2) -b\theta\wedge\theta^2) +3(- c\theta^1)\wedge \frac{1}{3}(a\theta-f\theta^2)
=3W^1\theta^1\wedge \theta^2+S^2_1\theta\wedge\theta^1+S^2_2\theta\wedge\theta^2,
$$
which implies
$$
W^1=\frac{2}{3}cf, \ \ \ S^2_1=2ac, \ \ \ S^2_2=bc.
$$
Moreover, we have the relation
$$
S^1_1=S^2_2=\tau^1_2\tau^2_1.
$$
Remark that from equations \ref{dW1} and \ref{dW2}
 
 \begin{equation}
 dW^1+3W^1\upsilon=W^1_0\theta+W^1_1\theta^1+W^1_2\theta^2
 \end{equation}
 and
 \begin{equation}
  dW^2-3W^2\upsilon=W^2_0\theta+W^2_1\theta^1+W^2_2\theta^2,
 \end{equation}
 we obtain

$$
-\frac{2}{9}cf (a\theta+e\theta^1-f\theta^2)
$$
$$
W^1_0=-\frac{2}{3}cfa,\ \ W^1_1=-\frac{2}{3}cfe,\ \ W^1_2=\frac{2}{3}cf^2
$$
and 
$$
\frac{2}{9}eb(a\theta+e\theta^1-f\theta^2)
$$

$$
W^2_0=-\frac{2}{3}eba,\ \ W^2_1=-\frac{2}{3}e^2b,\ \ W^2_2=\frac{2}{3}ebf
$$

Substituting the expressions obtained above in Proposition \ref{proposition:embedding} completes the proof.

\EPf

\subsection{Tables of invariant path structures on three dimensional Lie groups}\label{subsection:table}

In this section we put together the classification of path structures on the three dimensional Lie groups in the form of two tables.  One for solvable Lie groups and the other for the remaining simple groups.  In the following, we use the Bianchi classification of three dimensional Lie groups. Recall that the group Bianchi VI$_0$ is the Poincar\'e group and Bianchi VII$_0$ is the Euclidean group. The Bianchi groups of type I and V don't have invariant contact structures and therefore don't appear in the tables.  The proof of the classification can be read in the previous section \ref{subsection:3dim}.

\newpage
\begin{tabular}{ |p{5cm}||p{5cm}|p{2cm}|p{2cm}|  }
 \hline
 \multicolumn{4}{|c|}{Path structures on three dimensional solvable Lie groups} \\
 \hline
  & $\ad_Y$  & $Q^1$ & $Q^2$ \\
 \hline 
{{Heisenberg}, Bianchi II} &  {\small{$\left ( \begin{array}{cc}

                        0     	&    0         	\\

                        0	&      0	      \\
                        
                       0 	&      0	

                \end{array} \right ) $}}  &	$0$	& $0$  \\
\hline
\multirow{2}{15em}{$Aff(2)\times \R$  \\ Bianchi III} & $\left ( \begin{array}{cc}

                        0     	&    0         	\\

                        0	&     0	      \\
                        
                       1 	&      0	

                \end{array} \right ) $ &   $0$	& $0$\\ 
              
& $\left ( \begin{array}{cc}

                        0     	&    0         	\\

                        0	&     0	      \\
                        
                       1 	&      1	

                \end{array} \right ) $  & $0$  & 0\\ 
  \hline
  
 Bianchi IV 	&  
 $
 \left ( \begin{array}{cc}

                        0     		&    0      	\\

                       -\frac{1}{4} 	&      0	      \\
                        
                       1			&      0	

                \end{array} \right )
$  	&   $0$	& 0\\ 
              &	
$
 \left ( \begin{array}{cc}

                        \frac{1}{4}     		&    \frac{1}{4}      	\\

                       -\frac{1}{4} 	&     -\frac{1}{4}	      \\
                        
                       1			&      1	

                \end{array} \right )
$  &	 $-\frac{1}{4}(\frac{3}{8}-\frac{1}{3})$ & $-\frac{1}{4}(-\frac{3}{8}+\frac{1}{3})$  	\\
  \hline
\multirow{4}{15em}{Bianchi VI} &
$
 \left ( \begin{array}{cc}

                        0     	&   - 1     	\\

                       0 	&      0	      \\
                        
                       0		&      0	

                \end{array} \right )
$ {\tiny{Bianchi VI$_0$}}&		$0$ &   0\\
&
$
 \left ( \begin{array}{cc}

                        1     	&   - 1     	\\

                       1 	&      -1	      \\
                        
                       0		&      0	

                \end{array} \right )
$ {\tiny{Bianchi VI$_0$}}&	 $\frac{3}{2}$ & $-\frac{3}{2}$	   \\
 &  	$
 \left ( \begin{array}{cc}

                        0     	&    0      	\\

                       c 	&      0	      \\
                        
                       1		&      0	

                \end{array} \right )
$, {\tiny{$0\neq c>-\frac{1}{4}$}}
&  	$0$	&   0	\\	
&
$
 \left ( \begin{array}{cc}

                        -c     	&    -c     	\\

                       c 	&      c	      \\
                        
                       1		&      1	

                \end{array} \right )
$, {\tiny{$0\neq c>-\frac{1}{4}$}}
&	$c(-\frac{3}{2}c-\frac{1}{3})$ & $c(\frac{3}{2}c+\frac{1}{3})$	   	\\
  \hline
\multirow{2}{15em}{Bianchi VII} & 
$
 \left ( \begin{array}{cc}

                        0     	&    1     	\\

                       0 	&      0	      \\
                        
                       0 	&      0	

                \end{array} \right )
$  {\tiny{Bianchi VII$_0$}}
&	$0$	&  	0 \\
&  
$
 \left ( \begin{array}{cc}

                        1     	&    1      	\\

                       -1 	&      -1	      \\
                        
                       0	&      0	

                \end{array} \right )
$  {\tiny{Bianchi VII$_0$}}
&	$-\frac{3}{2}$ & $\frac{3}{2}$ 	 \\

&  
$
 \left ( \begin{array}{cc}

                        0     	&    0      	\\

                       c 	&      0	      \\
                        
                       1		&      0	

                \end{array} \right )
$, {\tiny{$c<-\frac{1}{4}$}}
&	$0$	&  	0 \\
&
$
 \left ( \begin{array}{cc}

                        -c     	&    -c     	\\

                       c 	&      c	      \\
                        
                       1		&      1	

                \end{array} \right )
$, {\tiny{$c<-\frac{1}{4}$}}
&		$c(-\frac{3}{2}c-\frac{1}{3})$ & $c(\frac{3}{2}c+\frac{1}{3})$ 	\\

 \hline 
\end{tabular}

\begin{tabular}{ |p{5cm}||p{4cm}|p{1.5cm}|p{1.5cm}|  }
 \hline
 \multicolumn{4}{|c|}{Path structures on three dimensional simple Lie groups} \\
 \hline
  & $\ad_Y$ & $Q^1$ &$Q^2$\\
 \hline 
 
\multirow{3}{15em}{$\SL(2,\R)$ \\ Bianchi VIII} 	
&
$ \left ( \begin{array}{cc}

                        1     	&    0        	\\

                        0	&      -1	      \\
                        
                       0 	&      0	

                \end{array} \right )$ 	 &	0 & 0\\
&
$ \left ( \begin{array}{cc}

                        0     	&    1         	\\

                        1	&      0	      \\
                        
                       0 	&      0	

                \end{array} \right )$ 	& 0	& 0\\
      & 		$ \left ( \begin{array}{cc}

                        1     	&    1         	\\

                        c	&      -1	      \\
                        
                       0 	&      0	

                \end{array} \right )$, {\tiny{$c>-1$}} 	&	
                	$-\frac{3}{2}$ & $-\frac{3}{2}c$\\
& $ \left ( \begin{array}{cc}

                        1     	&    -1         	\\

                        c	&      -1	      \\
                        
                       0 	&      0	

                \end{array} \right )$, {\tiny{$0<c<1$}} 	&		$\frac{3}{2}$ & $-\frac{3}{2}c$\\
    \hline

\multirow{2}{15em}{$SU(2)$  \\ Bianchi IX}
&
$ \left ( \begin{array}{cc}

                        0     	&    -1        	\\

                        1	&      0	      \\
                        
                       0 	&      0	

                \end{array} \right )$ 	&	0 & 0\\

                & $ \left ( \begin{array}{cc}

                        1     	&    1         	\\

                        c	&      -1	      \\
                        
                       0 	&      0	

                \end{array} \right )$, {\tiny{$c<-1$ }}	&		$-\frac{3}{2}$ & $-\frac{3}{2}c$\\
& $ \left ( \begin{array}{cc}

                        1     	&    -1         	\\

                        c	&      -1	      \\
                        
                       0 	&      0	

                \end{array} \right )$, {\tiny{$c>1$}} 	&		$\frac{3}{2}$ & $-\frac{3}{2}c$\\
    \hline

 \hline 
\end{tabular}

\subsection{Natural invariants in the case of
$\SL(2,\R)$}\label{subsection-invariantsSL2}
In this subsection, we relate the left-invariant
structures on $\SL(2,\R)$ to the geometry of the Killing form of ${\mathfrak {sl}}(2,\R)$ which defines a Lorentzian structure on $\SL(2,\R)$. 

 Recall the parallelism $X_1,X_2,Y=-[X_1,X_2]$ and the matrix of $\ad_Y$ in this basis (section \ref{subsection:3dim}):
$$
A= \left ( \begin{array}{cc}

                        a      	&    b          	\\

                        c 	&      -a	      \\
                        
                       e 	&      f	

                \end{array} \right )
$$
satisfying
$
af-be=0
$
and
$
cf+ae=0.
$
We have
$$
\ad_{X_1}=\left ( \begin{array}{ccc}

                        0      	&    0      &   -a	\\

                        0 	&      0	&   -c   \\
                        
                       0 	&      -1	& -e

                \end{array} \right ), \ \ \ad_{X_2}=\left ( \begin{array}{ccc}

                        0     	&    0      &   -b	\\

                        0 	&      0	&   a   \\
                        
                       1 	&      0	& -f

                \end{array} \right ),  \ \ \ad_{Y}=\left ( \begin{array}{ccc}

                        a     	&    b      &   0	\\

                        c 	&      -a	&   0   \\
                        
                       e 	&      f	& 0

                \end{array} \right ).
$$
Therefore, the Killing form of the Lie algebra satisfies
$$
\kappa(X_1,X_1)=\tr X_1^2=2c+e^2,\ \ \kappa(X_2,X_2)=\tr X_2^2=-2b+f^2, \ \ \kappa(Y,Y)=\tr Y^2=2(a^2+bc).
$$
$$
\kappa(X_1,X_2)=-2a+ef,\ \ \kappa(X_1,Y)=-ae-cf, \ \ \kappa(X_2,Y)=-be+af.
$$

\subsubsection{Lorentzian geometry of ${\mathfrak {sl}}(2,\R)$}
\label{subsubsection-Lorentziangeometrysl2}
Let us consider the Killing form of ${\mathfrak {sl}}(2,\R)$, $\kappa(u,v)=\frac{1}{2}\tr (uv)$ whose associated quadratic form is 
\begin{equation}\label{equation-definitionqsl2}
 q(u)=-\mathrm{det}(u).
\end{equation}

Then, using the standard basis
\begin{equation}\label{equation-definitionEFH}
 E=\begin{pmatrix}
    0 & 1 \\
    0 & 0
   \end{pmatrix},
F=\begin{pmatrix}
   0 & 0 \\
   1 & 0
  \end{pmatrix},
H=\begin{pmatrix}
   1 & 0 \\
   0 & -1
  \end{pmatrix}
\end{equation}
of ${\mathfrak {sl}}(2,\R)$
satisfying the relation $H=[E,F]$,
$(H,E+F,E-F)$ is an orthonormal basis of signature $(1,1,-1)$ for $q$.
In particular the choice of this basis identifies
${\mathfrak {sl}}(2,\R)$ to the Lorentzian Minkowski space $\R^{1,2}$.
Elements of ${\mathfrak {sl}}(2,\R)$ are thus distinguished
according to the sign of $q$:
$u\in{\mathfrak {sl}}(2,\R)$ is called
\begin{itemize}
 \item \emph{timelike} if $q(u)<0$,
 \item \emph{lightlike} if $q(u)=0$,
 \item \emph{spacelike} if $q(u)>0$.
\end{itemize}
Recall that an element of ${\mathfrak {sl}}(2,\R)$ is called
\emph{hyperbolic} (respectively \emph{parabolic}, \emph{elliptic})
if it generates a one-parameter group of the given kind
in $\SL(2,\R)$.
We have then the following straightforward
correspondence between the geometry of $q$ and the algebraic types
in ${\mathfrak {sl}}(2,\R)$.
\begin{lemma}\label{lemme-interpretationsousgroupesSL2}
Let $u\in{\mathfrak {sl}}(2,\R)$. Then:
 \begin{itemize}
  \item $u$ is timelike $\Leftrightarrow$
  $u$ is elliptic;
  \item $u$ is lightlike $\Leftrightarrow$
  $u$ is parabolic;
  \item $u$ is spacelike $\Leftrightarrow$
  $u$ is hyperbolic.
 \end{itemize}
\end{lemma}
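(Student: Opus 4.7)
The proof is essentially a direct case analysis based on the characteristic polynomial of a traceless $2\times 2$ matrix, so I will keep the outline brief.

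The plan is to compute, for an arbitrary $u\in{\mathfrak {sl}}(2,\R)$ written as $u=\begin{pmatrix} x & y \\ z & -x \end{pmatrix}$, the quadratic form $q(u)=-\det(u)=x^2+yz$, and to observe that because $\tr(u)=0$ the characteristic polynomial of $u$ reduces to $\lambda^2-q(u)$. Hence the eigenvalues of $u$ are $\pm\sqrt{q(u)}$, and the sign of $q(u)$ determines their nature (real distinct, zero repeated, or purely imaginary conjugate). The three items of the lemma will then follow by identifying each eigenvalue pattern with the corresponding conjugacy class in $\SL(2,\R)$, since $\exp(tu)$ produces the one-parameter subgroup.

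First I would treat the spacelike case: if $q(u)>0$, the two real distinct eigenvalues $\pm\sqrt{q(u)}$ imply that $u$ is diagonalizable over $\R$, so it is conjugate in $\mathfrak{sl}(2,\R)$ to a nonzero real multiple of $H$, and therefore $\exp(tu)$ is conjugate to a diagonal one-parameter subgroup, i.e.\ hyperbolic. Next, the timelike case: if $q(u)<0$, the eigenvalues are purely imaginary, so $u$ has no real invariant line; a standard normal-form argument (e.g.\ pick any nonzero $v$, use $\{v,uv\}$ as a basis) shows $u$ is conjugate in $\mathfrak{sl}(2,\R)$ to a real multiple of $E-F$, whose exponential is a rotation, hence elliptic. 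Finally the lightlike case: $q(u)=0$ with $u\neq 0$ gives a double eigenvalue $0$ with $u$ nonzero, hence $u$ is nilpotent of rank one, conjugate to a nonzero multiple of $E$, so $\exp(tu)=I+tu$ is parabolic. The case $u=0$ can be handled as a degenerate instance of any of the three, or simply excluded as in the convention stated before the lemma.

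Since all three implications run through normal forms that are mutually exclusive and exhaust ${\mathfrak {sl}}(2,\R)$, each equivalence follows in both directions at once. There is no real obstacle: the only mildly technical point is to justify that the conjugation producing the normal form can be chosen inside $\SL(2,\R)$ (so that the algebraic type of the generated one-parameter subgroup is preserved), which is standard since $\mathrm{GL}^{+}(2,\R)$ and $\SL(2,\R)$ act with the same orbits on $\mathfrak{sl}(2,\R)$ up to positive rescaling.
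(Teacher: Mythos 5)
Your proof is correct. The paper actually states this lemma without proof (it is introduced as a ``straightforward correspondence''), and your eigenvalue argument --- the characteristic polynomial of a traceless $u$ is $\lambda^2-q(u)$, so the sign of $q(u)$ dictates whether $u$ is $\R$-diagonalizable, nilpotent, or has purely imaginary eigenvalues, hence whether $\exp(tu)$ is hyperbolic, parabolic, or elliptic --- is exactly the standard verification the authors are implicitly invoking; your side remarks on the $u=0$ case and on conjugating within $\SL(2,\R)$ are fine (the latter is not even needed, since the type of a one-parameter subgroup is read off from $\tr\exp(tu)$, which is invariant under $\mathrm{GL}(2,\R)$-conjugation).
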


Lastly, $q$ is invariant by the adjoint action of
$\SL(2,\R)$, and more precisely
\begin{equation}\label{equation-AdSL2SO12}
 \mathrm{Ad}\colon g\in \SL(2,\R)\mapsto
 Ad_g\in SO^0(q)
\end{equation}
is surjective and has kernel $\{\pm id\}$,
$SO^0(q)$ denoting the connected component of the identity in
the stabilizer of $q$ in $\mathrm{GL}(\mathfrak{sl}(2,\R))$.

\subsubsection{The moduli space of left-invariant path structures on
$\SL(2,\R)$}
\label{subsubsection-modulispaceSL2}

\begin{dfn}
We define $\mathcal{S}$ to be the set of left-invariant path structures on
$\SL(2,\R)$.
\end{dfn}
Our goal is now to extract the
natural geometric invariants distinguishing,
in the space $\mathcal{S}$, those which are locally
isomorphic.
\par We say that a plane $P$ in $\mathfrak{sl}(2,\R)$
is
\begin{itemize}
 \item \emph{timelike} if $q|_P$ has Lorentzian signature $(1,-1)$,
 \emph{i.e.} if $P$ contains a timelike line;
 \item \emph{lightlike} if $q|_P$ has signature $(1,0)$,
 \emph{i.e.} if it is degenerated,
 or if $P$ contains exactly one lightlike line;
 \item \emph{spacelike} if $q|_P$ has euclidean signature $(1,1)$,
 \emph{i.e.} if $P$ contains no lightlike line.
\end{itemize}

Each choice of a plane in $\mathfrak{sl}(2,\R)$ defines, by translation,  a distribution on $\SL(2,\R)$.
Note that the adjoint action by
$\SL(2,\R)$ on the set of planes in $\mathfrak{sl}(2,\R)$ has three orbits.
Indeed since
this action factorizes through the action of $SO^0(1,2)$ on the
Minkowski space $\R^{1,2}$
according to \eqref{equation-AdSL2SO12},
it is sufficient to check that $SO^0(1,2)$
has three orbits on the set of planes of $\R^{1,2}$,
which the following dichotomy shows:
\begin{description}
 \item[Timelike planes] The restriction of $SO^0(1,2)$
 to the
open set of timelike lines of $\R^{1,2}$
is conjugated to the
action of the group of orientation-preserving isometries
on the hyperbolic plane,
which is transitive
with the stabilizer of any point $p$
acting transitively on unitary tangent vectors at $p$.
This shows in particular that $SO^0(1,2)$ acts transitively
on the set of timelike planes.
\item[Spacelike planes] The restriction of $SO^0(1,2)$
 to the
open set of spacelike half-lines of $\R^{1,2}$
is conjugated to the
action of the group of orientation and time-orientation
preserving isometries
on the de-Sitter space,
which is transitive
with the stabilizer of any point $p$
acting transitively on unitary spacelike tangent vectors at $p$.
This shows in particular
that $SO^0(1,2)$ acts transitively
on the set of spacelike planes.
\item[Lightlike planes] $SO^0(1,2)$
 acts transitively on the
closed set of lightlike lines of $\R^{1,2}$,
and any lightlike line is contained in a unique lightlike plane.
This shows that $SO^0(1,2)$ acts transitively
on the set of lightlike planes.
\end{description}
The open orbits of timelike and spacelike planes
of $\mathfrak{sl}(2,\R)$
correspond to the two invariant contact distributions in $\SL(2,\R)$. The closed orbit of timelike planes
corresponds to an integrable distribution which will not be relevant in the sequel.

Consider the projective space $\mathbb{P}(\mathfrak{sl}(2,\R))$. We introduce the two open subsets in 
$\mathbb{P}(\mathfrak{sl}(2,\R))\times \mathbb{P}(\mathfrak{sl}(2,\R))$,
\begin{align}\label{equation-Espaceettime}
 \mathcal{E}_{space}&=
 \{(D_1,D_2)\}|\{D_1\neq D_2, D_1\oplus D_2\text{~spacelike}\}, \\
 \mathcal{E}_{time}&=
 \{(D_1,D_2)\}|\{D_1\neq D_2, D_1\oplus D_2\text{~timelike}\}
\end{align}
 as parameter spaces for
$\mathcal{S}$.
Note that two pairs of distinct points of
$\mathbb{P}(\mathfrak{sl}(2,\R))$ are in the same orbit
under the adjoint action of $\SL(2,\R)$,
if and only if the pairs of left-invariant line fields
that they generate are related by an element of $\SL(2,\R)$.
Hence any invariant of adjoint orbits of $\SL(2,\R)$
will be relevant for our study.
There exists one natural   invariant in each
of the open subsets $\mathcal{E}_{space}$ and $\mathcal{E}_{time}$. 

\begin{dfn} For $(D_1,D_2)\in\mathcal{E}_{space}$ or $\mathcal{E}_{time}$,
pairs with a lightlike line being excluded, define the cross-ratio
$$
cr(D_1,D_2)=\frac{\kappa (X_1,X_2)\kappa (X_2,X_1)}{\kappa (X_1,X_1)\kappa (X_2,X_2)}
$$
with $X_i$ a generator of $D_i$ for $i=1,2$.
\end{dfn}

\par For any $u\subset\mathfrak{sl}(2,\R)$
$\tilde{u}$ denotes the left-invariant vector field of
$\SL(2,\R)$ generated by $u$,
with an analog notation for left-invariant line or plane fields.

\begin{prop}\label{proposition-parametrisation}
The map
\begin{equation}\label{equation-parametrisationS}
 (D_1,D_2)\in\mathcal{E}_{space}\cup\mathcal{E}_{time}\mapsto
 (\widetilde{D_1},\widetilde{D_2})\in\mathcal{S}
\end{equation}
is surjective.
Moreover, $(\widetilde{D_1},\widetilde{D_2})$ and
$(\widetilde{D_1'},\widetilde{D_2'})$ are locally isomorphic if,
and only if one of the following mutually
exclusive conditions is satisfied,
with $P=D_1\oplus D_2$ and $P'=D_1'\oplus D_2'$.
\begin{enumerate}
 \item All the lines $D_i$ and $D_i'$ are lightlike,
 and in this case the associated path structures are flat.
 \item Exactly one of the lines $(D_1,D_2)$,
 respectively $(D_1',D_2')$ is lightlike and the others are of the same type.  In this case one of the Cartan curvatures is null.
 \item $P$ and $P'$
 are spacelike
 and $cr(D_1,D_2)=cr(D'_1,D'_2)$.
 \item $P$ and $P'$ are timelike,  $D_i$ of the same type as $D_i'$ for $1\leq i\leq 2$,
 none of the lines is lightlike,
 and $cr(D_1,D_2)
 =cr(D_1',D_2')$.  There are three components corresponding to $D_1$ and $D_2$ both timelike, both space-like or  of different type.
 
\end{enumerate}
\end{prop}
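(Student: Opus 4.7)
The plan is to reduce the classification of left-invariant path structures on $\SL(2,\R)$ up to local isomorphism to a classification of ordered pairs of distinct lines in $\mathfrak{sl}(2,\R)$ under the action of $\mathrm{Aut}(\mathfrak{sl}(2,\R))$, combined with a possible swap of the two components, and then to read off the orbits from the Lorentzian geometry of $(\mathfrak{sl}(2,\R),q)$. For surjectivity, I would first observe that a left-invariant plane field $\widetilde P$ on $\SL(2,\R)$ is contact if and only if $P=D_1\oplus D_2\subset\mathfrak{sl}(2,\R)$ is not a Lie subalgebra. The $2$-dimensional subalgebras of $\mathfrak{sl}(2,\R)$ are the Borel subalgebras, and a direct computation in the basis $(H,E,F)$ of \eqref{equation-definitionEFH} shows that a plane is a Borel subalgebra if and only if $q$ degenerates on it, i.e.\ if and only if it is lightlike in the sense of Subsection~\ref{subsubsection-Lorentziangeometrysl2}. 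Hence the contact planes are exactly the spacelike and timelike ones, which yields the surjectivity of the parametrization.

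For the equivalence part, I would justify that every local isomorphism between two non-flat left-invariant path structures on $\SL(2,\R)$ extends to a Lie group automorphism. The idea is to invoke Tresse's bound to force the automorphism group of such a structure to be at most $3$-dimensional; since left translations already furnish a $3$-dimensional group of automorphisms, any local isomorphism must be, up to a left translation, a group automorphism, and hence induces a Lie algebra automorphism $\varphi$ of $\mathfrak{sl}(2,\R)$ with $\varphi(D_i)=D_i'$ up to exchanging the labels. The flat case is treated separately by combining the explicit formulas of Proposition~\ref{proposition:curvatures} with the entries of the table: a direct computation shows that $Q^1$ and $Q^2$ simultaneously vanish on $\SL(2,\R)$ exactly when $D_1$ and $D_2$ are both lightlike, and by definition all such structures are locally equivalent to the model, giving case~(1).

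For the orbit analysis I would use that $\mathrm{Aut}(\mathfrak{sl}(2,\R))$ sits inside $O(q)$ with identity component $\mathrm{Ad}(\SL(2,\R))\simeq SO^0(q)$ via \eqref{equation-AdSL2SO12}, so both the causal type of each $D_i$ and of $P=D_1\oplus D_2$, as well as $cr(D_1,D_2)$ when defined, are invariants. In case~(3) where $P$ is spacelike, $q|_P$ is positive definite and the stabilizer of $P$ acts on $P$ as $O(2)$, so pairs of distinct spacelike lines are classified by $cr$ alone. In case~(4) where $P$ is timelike, $q|_P$ has signature $(1,-1)$ and its stabilizer acts as $O(1,1)$; the three sign combinations for $(q(X_1),q(X_2))$ produce three disjoint orbit components, each parametrized by $cr$. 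Finally in case~(2) exactly one of the $D_i$ is lightlike; $P$ is then necessarily timelike, and transitivity of the stabilizer on the lines of the remaining type closes this case.

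The main obstacle will be the rigidity argument for the reduction to Lie algebra data, namely verifying that a local isomorphism of non-flat left-invariant structures is rigid enough to lift to an automorphism of $\mathfrak{sl}(2,\R)$; this is where Tresse's theorem is essential, and one must be careful to separate the flat locus, which has an $8$-dimensional symmetry and collapses all its structures onto a single equivalence class. Once this reduction is secured, the remaining work is a direct linear-algebraic orbit computation in the Minkowski space $(\mathfrak{sl}(2,\R),q)$, relying on the classical fact that $cr$ is a complete $O(q)$-invariant of pairs of non-null lines in a non-degenerate plane, together with the identification of the connected components of the stabilizer in the timelike case to account for the three components appearing in statement~(4).
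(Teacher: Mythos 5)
Your overall strategy is genuinely different from the paper's. The paper proves the proposition by direct computation: it specializes Proposition~\ref{proposition:curvatures} to $e=f=0$ to get $Q^1=-\frac{3}{2}ab$ and $Q^2=-\frac{3}{2}ac$, expresses $\kappa(X_1,X_1)=2c$, $\kappa(X_2,X_2)=-2b$, $\kappa(X_1,X_2)=-2a$ and $cr(D_1,D_2)=-a^2/(bc)$, runs through the sign cases after normalizing $a=1$, and closes the loop by invoking that two homogeneous structures with the same $(Q^1,Q^2)$ are locally isomorphic (via the classification of section~\ref{subsection:3dim} and Theorem~\ref{thm-reduction}). You instead reduce local isomorphism to the action of $\mathrm{Aut}(\mathfrak{sl}(2,\R))\subset O(q)$ on ordered pairs of lines and perform a Lorentzian orbit count; your surjectivity argument (contact $\Leftrightarrow$ not a Borel subalgebra $\Leftrightarrow$ non-degenerate plane) is also more substantiated than the paper's one-line assertion. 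The route is legitimate and arguably more conceptual, but as written it has two concrete gaps.

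First, the rigidity step is a non sequitur in the form you give it. Knowing that the automorphism group of a non-flat left-invariant structure is exactly the $3$-dimensional group of left translations does not by itself imply that a local isomorphism between two \emph{different} left-invariant structures is a group automorphism up to translation: the local isomorphisms form a torsor under the automorphism pseudogroup, so Tresse only gives near-uniqueness of the germ fixing $e$, not that this germ is multiplicative. The missing ingredient is precisely Theorem~\ref{thm-reduction}: the non-flat structure carries a canonical adapted parallelism (or $\Z/2\Z$-frame), which is left-invariant by naturality; a local isomorphism must carry one canonical frame to the other, forcing equal structure constants, so its differential at $e$ is a Lie algebra automorphism sending $D_i$ to $D_i'$. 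Note also that, the frame being adapted to the \emph{ordered} splitting, no swap of the two components is permitted, so your "possible swap of the two components" should be dropped (it would wrongly identify a timelike--spacelike pair with a spacelike--timelike one, contradicting condition~(4)).

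Second, your identification of the flat locus is incorrect. From $Q^1=-\frac{3}{2}ab$ and $Q^2=-\frac{3}{2}ac$ the curvatures vanish simultaneously not only when $b=c=0$ (both lines lightlike) but also when $a=0$, i.e.\ when $\kappa(X_1,X_2)=0$; the paper's own case list records that two orthogonal lines, one timelike and one spacelike, give a flat structure, and the same happens for two orthogonal spacelike lines spanning a spacelike plane. So the claim that $Q^1=Q^2=0$ exactly when both lines are lightlike is false, and with it the argument that case~(1) accounts for all flat structures. (This edge case is in fact a blemish of the statement itself, whose four conditions do not cover a flat structure coming from an orthogonal non-lightlike pair being isomorphic to one coming from a lightlike pair; but a proof must confront it rather than assert the false equivalence.)
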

\begin{proof}
Surjectivity follows from the fact that contact distributions arise  either from  timelike planes or spacelike planes.  The closed orbit of lightlike planes is excluded.

Consider vectors $X_1$ and $X_2$ generating $D_1$ and $D_2$.  Let $X_1,X_2,Y=-[X_1,X_2]$ be a basis of the Lie algebra and the matrix of $\ad_Y$ in this basis (section \ref{subsection:3dim})
for $\mathfrak{sl}(2,\R)$ be written as:
$$
A= \left ( \begin{array}{cc}

                        a      	&    b          	\\

                        c 	&      -a	      \\
                        
                       0 	&      0	

                \end{array} \right ).
$$
 From the computation of the Killing form we obtain $\kappa(X_1,X_1)=2c$, $\kappa(X_2,X_2)=-2b$ and $\kappa(X_1,X_2)=-2a$. Also $Q^1=-\frac{3}{2}ab$ and 
$Q^2=-\frac{3}{2}ac$.  One also have
$$
cr(D_1,D_2)=\frac{\kappa (X_1,X_2)\kappa (X_2,X_1)}{\kappa (X_1,X_1)\kappa (X_2,X_2)}=-\frac{a^2}{bc}.
$$
The proposition now follows from the following computations:
\begin{itemize}
\item If $D_1$ and $D_2$ are lightlike, the invariant flag structure is flat.
\item If the two lines are orthogonal, one timelike and the other spacelike, the invariant flag structure is flat.
\item  If $c=0, b>0$, $(D_1,D_2)$ is timelike ($X_1$ lightlike and $X_2$ timelike)  and one can normalize so that $a=1, b=1$.  One verifies that $Q^1=-1$ and $Q^2=0$.
\item If $c=0, b<0$, $(D_1,D_2)$ is timelike ($X_1$ lightlike and $X_2$ spacelike)  and one can normalize so that $a=1, b=-1$. One verifies that $Q^1=1$ and $Q^2=0$.
\item If $c>0, b<0$, $(D_1,D_2)$ is spacelike (if $a^2+bc<0$) or timelike (if $a^2+bc>0$). One can normalize so that $a=1, b=-1$.  Then $Q^1=\frac{3}{2}$, $Q^2=-\frac{3}{2}c$ and $cr(D_1,D_2)=1/c$. 
\item  If $c>0, b>0$, $(D_1,D_2)$ is timelike ($X_1$ spacelike and $X_2$ timelike)  and one can normalize so that $a=1, b=1$. Then $Q^1=-\frac{3}{2}$, $Q^2=-\frac{3}{2}c$ and $cr(D_1,D_2)=-1/c<0$.
\item  If $c<0, b>0$, $(D_1,D_2)$ is timelike ($X_1$ and $X_2$ timelike)  and one can normalize so that $a=1, b=1$.
Then $Q^1=-\frac{3}{2}$, $Q^2=-\frac{3}{2}c$ and $cr(D_1,D_2)=-1/c>0$.
\end{itemize}
To conclude the reverse implication of the Proposition,
we use the fact that two homogeneous path structures having the same invariants $Q^i$
are locally isomorphic, as a consequence of the classification
of section \ref{subsection:3dim}
and of the proof of Theorem \ref{thm-reduction}.
\end{proof}

\section{Appendix}

 \subsection{Transformation properties of the adapted connection for path structures}\label{A-transformation}
  
  In this section we review the explicit transformation properties of the Cartan connection of  the $B_\R$ (Borel group) principal bundle associated to a path structure (see \cite{FVflag} for an analogous computation in the context of a $B_\C$ principal bundle).
  
We  compute  $Ad_{h^{-1}}\pi$ for an element 
$$
h=
\left( \begin{array}{ccc}

                        a  	&    	c      	&       e \\

                       0				 &     \frac{1}{ab}		&	f	\\
                       0   				&  	0 				&		b
                      
       \end{array} \right )
$$
We have, for a constant $h$,
$$
\tilde{\pi}=Ad_{h^{-1}}\pi.
$$

A computation shows that

\begin{align}
\label{adjunta}
\tilde\omega  &= \frac{a}{b}\, \omega\notag\\
\tilde{\omega}^1 &=a^2b\, \omega^1-a^2f\,\omega\notag\\
\tilde\omega^2 &=\frac{1}{ab^2}\,\omega^2+\frac{c}{b}\,\omega\notag\\
\tilde\phi &=\phi-\frac 1 2 abc\,\omega^1-\frac{f}{2b}\,\omega^2+(\frac 1 2 acf-\frac{e}{b})\,\omega\\
\tilde w &=w -\frac{1}{2}abc\,\omega^1+\frac{f}{2b}\,\omega^2+\frac 1 2acf\,\omega\notag\\
\tilde\phi^1 &=b^2a\, \phi^1-3abf\, w+baf\,\phi+bae\, \omega^1-f^2a\,\omega^2-fae\, \omega\notag\\
\tilde\phi^2 &=\frac{1}{ba^2}\, \phi^2+\frac{3c}{a}\, w+\frac{c}{a}\,\phi-bc^2\, 
\omega^1+(-\frac{
e}{a^2b^2}+\frac{cf}{ab})\,\omega^2+(-\frac{ce}{ab}+{fc^2})\, \omega\notag\\
\tilde\psi &=\frac{b}{a}\,\psi+(\frac{2e}{a}-bcf)\,\phi -bce\,\omega^1+(-\frac{fe}{ab}+{cf^2})\,
\omega^2-cb^2\,\phi^1+\frac{f}{a}\, \phi^2+3fbc\, w+(-\frac{e^2}{ab}+{fce})\, \omega\notag
\end{align}
and for the curvature 
$$
\Pi= d\pi+\pi\wedge\pi=\left ( \begin{array}{ccc}
                        0 &   \Phi^{2}      &  \Psi   \\

                         0  &  0    &  \Phi^1  \\

                           0  &   0  &    0
               \end{array} \right ),
$$
$$
\tilde \Pi=Ad_{h^{-1}}\Pi=
\left ( \begin{array}{ccc}
                        0 &  \frac{1}{a^2b}\Phi^{2}      & \frac{b}{a}\Psi+ \frac{f}{a}\Phi^{2} -cb^2 \Phi^1\\

                         0  &  0    &  ab^2 \Phi^1 \\

                           0  &   0  &    0
               \end{array} \right ).
$$

That is, 
$$
\tilde \Phi^1=ab^2\Phi^1,
$$
$$
\tilde \Phi^2=\frac{1}{a^2b}\Phi^2,
$$
and
$$
 \tilde \Psi =\frac{b}{a}\Psi +\frac{f}{a}\Phi^2-cb^2\Phi^1.
 $$
Recalling that $
\Phi^1=Q^1\omega\wedge \omega^2,\ \ 
\Phi^2=Q^2\omega\wedge \omega^1 \ \mbox{and}\ \ 
\Psi =\left( U_1\omega^1+ U_2\omega^2\right) \wedge \omega,
$
one may write 
therefore
$$
\tilde Q^1\tilde \omega\wedge \tilde\omega^2=ab^2\, Q^1\omega\wedge\omega^2.
$$
But $\tilde Q^1\tilde \omega\wedge \tilde\omega^2=\tilde Q^1\frac{a}{b}\, \omega\wedge \frac{1}{ab^2}\omega^2$
and then
\begin{align}\label{transformationsQ}
\tilde Q^1=ab^5\, Q^1,\\
\tilde Q^2=\frac{1}{a^5b}\, Q^2
\end{align}
and, analogously,
\begin{align}\label{transformationsU}
\tilde U_1=\frac{b}{a^4}(U_1-\frac{f}{b}Q^2),\\
\tilde U_2=\frac{b^4}{a}( U_2+abcQ^1).
\end{align}

These transformation properties imply that we can define two tensors on $Y$ which are invariant under $H$ and will give rise to two tensors on $M$.
Indeed
$$
 Q^1\,\omega^2\wedge  \omega\otimes { \omega}\otimes   e_1
$$
and
$$
 Q^2\,\omega^1\wedge  \omega\otimes { \omega} \otimes  e_2,
$$
where $e_1$ and $e_2$ are duals to $\omega^1$ and $\omega^2$ in the dual frame of the coframe bundle of $Y$, are easily seen to be $H$-invariant.

\subsection{The embedding $\iota : Y^1\to Y$}\label{section:pseudo-embedding}

We recall here the embedding described in \cite{FVflag,FVODE}. We give additional details of the computation of the path structure curvatures in terms of invariants of the strict path structure. This embedding allows curvatures of path structures to be expressed 
in terms of curvatures of strict path structures which, in turn,  are much easier to compute.

\begin{prop} \label{proposition:embedding}
There exists a unique equivariant embedding of fiber bundles $\iota : Y^1\to Y$ such that 
\begin{equation}\label{iota}
\iota ^* \omega =\theta,\ \ 
\iota ^* \omega^1 =\theta^1,\ \ 
\iota ^* \omega^2 =\theta^2,\ \ 
\iota ^* \phi =0.
\end{equation}
Moreover 
$$
\iota ^*Q^1=S^1_2+\frac{3}{2}R\tau^1_2+2W^2_2-\frac{1}{2}R_{22}
$$
and
$$
\iota ^*Q^2=-S^2_1+\frac{3}{2}R\tau^2_1 -2W^1_1+\frac{1}{2}R_{11}.
$$

\end{prop}

\Pf
In order to express the curvatures of the path structures,  we first compute the pull-back of the connection of the path structure
in terms of the connection of the strict structure.  The pull back of the structure equations are (here we denote the pull-back of a form by the same letter except for the forms $\omega, \omega^1, \omega^2$ which are given, from the above proposition, as   $\theta, \theta^1, \theta^2$):

 \begin{align}  
d\theta 		&= \theta^1\wedge\theta^{2}  \notag\\
d\theta^1	&= 
3w \wedge \theta^1 +\theta\wedge \phi^1\notag\\ 
d\theta^2 	& =  -3 w\wedge \theta^2-\theta\wedge \phi^2\notag\\
\theta \wedge \psi			& = \frac{1}{2}(\phi^{2}\wedge \theta ^1+\phi^1\wedge \theta^{2})\notag\\
dw 			& = \frac{1}{2}(-\phi^{2}\wedge \theta ^1+\phi^1\wedge \theta^{2})\notag\\
Q^1\theta\wedge \theta^2 	&=d\phi^1 +3\phi^1\wedge w+\theta^1\wedge \psi  \notag\\
Q^2\theta\wedge \theta^1 	& = d\phi^2 -3\phi^2\wedge w-\theta^2\wedge \psi \notag\\ 
 d\psi-\phi^1\wedge \phi^2 	&= (U_1\theta^1+ U_2\theta^2)\wedge \theta.\notag
 \end{align}
 
 It follows, by comparing with Proposition \ref{cartan-strict}, that 
 \begin{align}  
w	&=\upsilon +c\theta \notag\\
\phi^1 &= \tau^1-3c\theta^1 +E^1\theta\notag\\
\phi^2 &= -\tau^2-3c\theta^2+ E^2\theta\notag.
 \end{align} 
 where $E^1,E^2$ and $c$ are functions on $Y^1$.  We obtain then that
 $$
 \psi=\frac{1}{2}(E^2\theta^1+E^1\theta^2+G\theta)
 $$
where $G$ is a function on $Y^1$.  Substituting the formulas for $\phi^1, \phi^2$ and $w$ in the equation for $dw$ we obtain, using equation \ref{dtheta11}

$$
dw=R\theta^1\wedge \theta^2+W^1\theta^1\wedge\theta+W^2\theta^2\wedge\theta +dc\wedge \theta + cd\theta= \frac{1}{2}(-6c\theta^1\wedge \theta^2 -E^2\theta\wedge \theta^1+E^1\theta\wedge \theta^2),
$$
which implies, comparing the terms in $\theta^1\wedge \theta^2$,
$$
c=-\frac{R}{4},
$$
and then, using $
 dR=R_0\theta +R_1\theta^1+R_2\theta^2$, we obtain
 $$
 E^1=-2W^2+\frac{R_2}{2},\ \   E^2=2W^1-\frac{R_1}{2}.
 $$
 \begin{itemize}
 \item
 Substituting the formulas for $\phi^1, \psi$ and $w$, with the above values of $E^1$ and $c$, in the equation $Q^1\theta\wedge \theta^2 	=d\phi^1 +3\phi^1\wedge w+\theta^1\wedge \psi$, we obtain
 \begin{equation}\label{Q1strict}
 Q^1\theta\wedge \theta^2 =d(\tau^1-3c\theta^1 +E^1\theta)+3(\tau^1-3c\theta^1 +E^1\theta)\wedge (\upsilon +c\theta)+\theta^1\wedge  \frac{1}{2}(E^1\theta^2+G\theta).
 \end{equation}
  Compute, using equations \ref{dW2} and  \ref{dR2}
 $$
 dE^1\wedge \theta= 
 d(-2W^2+\frac{R_2}{2})\wedge \theta=-2(3W^2\upsilon+W^2_1\theta^1+W^2_2\theta^2)\wedge \theta
 +\frac{1}{2}(3R_2\upsilon-\frac{1}{2}R_0\theta^1+R_{12}\theta^1+R_{22}\theta^2)\wedge \theta,
 $$
 and
 $$
 -3d(c\theta^1)= -3dc\wedge \theta^1-3cd\theta^1=\frac{3}{4}dR\wedge \theta^1+\frac{3}{4}R(-3\theta^1\wedge \upsilon +\theta\wedge \tau^1)
 $$

 Recall also equation \ref{dtau1}
 $$d\tau^1+3\tau^1\wedge \upsilon=3W^2\theta^1\wedge \theta^2+S^1_1\theta\wedge\theta^1+S^1_2\theta\wedge\theta^2.
 $$
 
 Substituting the above expressions into equation \ref{Q1strict} we obtain
 \begin{align}
 Q^1\theta\wedge \theta^2 &=-3\tau^1\wedge \upsilon+3W^2\theta^1\wedge \theta^2+S^1_1\theta\wedge\theta^1+S^1_2\theta\wedge\theta^2 +\frac{3}{4}dR\wedge \theta^1+\frac{3}{4}R(-3\theta^1\wedge \upsilon +\theta\wedge \tau^1)\notag\\
 & 	-2(3W^2\upsilon+W^2_1\theta^1+W^2_2\theta^2)\wedge \theta
 +\frac{1}{2}(3R_2\upsilon-\frac{1}{2}R_0\theta^1+R_{12}\theta^1+R_{22}\theta^2)\wedge \theta
 +	E^1\theta^1\wedge \theta^2\notag\\
 &	+3(\tau^1-3c\theta^1 +E^1\theta)\wedge (\upsilon +c\theta)+\theta^1\wedge  \frac{1}{2}(E^1\theta^2+G\theta),\notag
 \end{align}
 which, using $
 3E^1\theta\wedge \upsilon=(-6W^2+\frac{3R_2}{2})\theta\wedge \upsilon
 $, simplifies to an expression with no terms involving $\upsilon$:
 \begin{align}
 Q^1\theta\wedge \theta^2 &=3W^2\theta^1\wedge \theta^2+S^1_1\theta\wedge\theta^1+S^1_2\theta\wedge\theta^2 +\frac{3}{4}dR\wedge \theta^1+\frac{3}{4}R\theta\wedge \tau^1\notag\\
 & 	-2(W^2_1\theta^1+W^2_2\theta^2)\wedge \theta
 +\frac{1}{2}(-\frac{1}{2}R_0\theta^1+R_{12}\theta^1+R_{22}\theta^2)\wedge \theta
 +	E^1\theta^1\wedge \theta^2\notag\\
 &	+3(\tau^1-3c\theta^1 )\wedge c\theta +\theta^1\wedge  \frac{1}{2}(E^1\theta^2+G\theta).\notag
 \end{align}
 Observe now that the coefficient of $\theta^1\wedge\theta^2$ is
 $$
 3W^2-\frac{3}{4}R_2+\frac{3}{2}E^1= 0.
 $$
Therefore we may simplify to
   \begin{align}
 Q^1\theta\wedge \theta^2 &=(S^1_1+R_0+2W^2_1-\frac{1}{2}R_{12}+\frac{9}{16}R^2-\frac{1}{2}G) \theta\wedge\theta^1\notag\\
 &+(S^1_2+\frac{3}{2}R\tau^1_2+2W^2_2-\frac{1}{2}R_{22})\theta\wedge\theta^2
\notag
  \end{align}
 and conclude that 
   \begin{align}
 Q^1 &=S^1_2+\frac{3}{2}R\tau^1_2+2W^2_2-\frac{1}{2}R_{22}
  \end{align}
\item
Analogously,
substituting the formulas for $\phi^1, \psi$ and $w$, with the above values of $E^1$ and $c$, in the equation $Q^2\theta\wedge \theta^1 	 = d\phi^2 -3\phi^2\wedge w-\theta^2\wedge \psi $
we obtain
 \begin{equation}\label{Q2strict}
 Q^2\theta\wedge \theta^1 =d(-\tau^2-3c\theta^2+ E^2\theta)-3(-\tau^2-3c\theta^2+ E^2\theta)\wedge (\upsilon +c\theta)-\theta^2\wedge  \frac{1}{2}(E^2\theta^1+G\theta).
 \end{equation}
  
  Following the same computations we conclude with the formula
   \begin{align}
 Q^2 &=-S^2_1+\frac{3}{2}R\tau^2_1 -2W^1_1+\frac{1}{2}R_{11}.
  \end{align}

\end{itemize}
 
 \EPf
 
A similar formula is obtained in section of \cite{FVODE} (see proposition  3.5) for the curvature of the path structure adapted bundle in terms of invariants of an enriched structure.   We included a proof here because we used the embedding of the strict path structure adapted bundle  into the path structure adapted bundle instead.   Moreover, conventions are slightly changed.  Namely, in the definition of the strict structure bundle, the following changes should be made from the present paper to \cite{FVODE}:
$\tau^1_2\to \tau^1_2$, $\tau^1_2\to \tau^1_2$, $R\to S$, $W^1\to -C$, $W^2\to -D$, $S^1_2\to \tau^1_{20}$ and $S^2_1\to -\tau^2_{10}$.

\section{}

The authors have no conflict of interest to declare that are relevant to this article.

\section{Data : "Not available".}

\begin{flushleft}
  \textsc{E. Falbel\\
  Institut de Math\'ematiques \\
  de Jussieu-Paris Rive Gauche \\
CNRS UMR 7586 and INRIA EPI-OURAGAN \\
 Sorbonne Universit\'e, Facult\'e des Sciences \\
4, place Jussieu 75252 Paris Cedex 05, France \\}
 \verb|elisha.falbel@imj-prg.fr|
 \end{flushleft}
 
 \begin{flushleft}
  \textsc{M. Mion-Mouton\\
Max Planck Institute for Mathematics in the Sciences\\
in Leipzig\\}
 \verb|martin.mion@mis.mpg.de|
\end{flushleft}
\begin{flushleft}
  \textsc{J. M.  Veloso\\
  Faculdade de Matem\' atica - ICEN\\
Universidade Federal do Par\'a\\66059 - Bel\' em- PA - Brazil}\\
  \verb|veloso@ufpa.br|
\end{flushleft}

\end{document}